\newtheorem{theo}{Theorem}[section]
\newtheorem{lem} [theo]{Lemma}
\newtheorem{coro}[theo]{Corollary}
\newtheorem{re}[theo]{Remark}
\newtheorem{conj}[theo]{Conjecture}
\makeatletter \@addtoreset{equation}{section}
\def\r2{r_2(i_b,i_c,j,j')}
\begin{document}
\begin{center}
{\Large\bf Richardson tableaux and noncrossing partial matchings}

\vskip 6mm
{\small   }
Peter L. Guo 

\end{center}

\begin{abstract}
Richardson tableaux   are a remarkable subfamily of standard Young tableaux introduced by Karp and Precup in order to index the irreducible components of  Springer fibers   equal to Richardson varieties. We show that the set of insertion tableaux of noncrossing partial matchings on $\{1,2,,\ldots, n\}$ by applying the Robinson--Schensted  algorithm coincides with the set of Richardson tableaux of size $n$. 
This leads to a natural one-to-one correspondence  between  the set of Richardson tableaux of size $n$ and the set of Motzkin paths with $n$ steps, in response to a problem proposed by Karp and Precup. As consequences, we recover some known and establish  new   properties   for Richardson tableaux.   Especially, we relate the $q$-counting of Richardson tableaux to $q$-Catalan numbers.

\end{abstract}

\vskip 3mm




\newcommand*\cir[1]{\tikz[baseline=(char.base)]{
            \node[shape=circle,draw,inner sep=2pt] (char) {#1};}}

\section{Introduction}

Let $\lambda=(\lambda_1\geq \lambda_2\geq \cdots\geq \lambda_\ell>0)$ be a partition of $n$ with $\ell$ parts, and $N$ be an  $n\times n$ nilpotent matrix  which is in Jordan canonical form with block
 sizes $\lambda_1,\ldots, \lambda_\ell$ weakly decreasing. 
The Springer fiber $\mathcal{B}_\lambda$ indexed by $\lambda$  is  a subvariety of the flag variety $\mathrm{Fl}_n(\mathbb{C})=\{F_0\subset F_1\subset \cdots \subset F_n=\mathbb{C}^n\colon \mathrm{dim}(F_j)=j\}$   \cite{Springer}.  Precisely,   
\[
\mathcal{B}_\lambda=\{F_0\subset F_1\subset \cdots \subset F_n=\mathbb{C}^n\in \mathrm{Fl}_n(\mathbb{C})\colon N(F_j)\subset F_{j-1}\ \text{for $1\leq j\leq n$}\}.
\]
The cohomology  ring  of a Springer fiber  carries a representation  of the symmetric group  where the top-dimensional cohomology is irreducible \cite{Springer-3} .
By the work of Spaltenstein \cite{Spa}, the irreducible components of $\mathcal{B}_\lambda$ are indexed by standard Young tableaux $T$ of shape $\lambda$. Let $\mathcal{B}_T$ denote the associated   irreducible component  of $\mathcal{B}_\lambda$. It remains an open problem, as posed by Springer \cite{Springer-2}, about how to expand  the cohomology class $[\mathcal{B}_T]$ in terms of the basis of  Schubert classes in the cohomology  of $\mathrm{Fl}_n(\mathbb{C})$. 

Recently, motivated the study of totally nonnegative Springer fibers by Lusztig  \cite{Lusztig}, Karp and Precup \cite{KP}  found a complete criterion of when $\mathcal{B}_T$ is equal to a Richardson variety $R_{v, w}$, namely, the intersection of a Schubert variety $X_w$ and an opposite Schubert variety $X^v$.
To be specific, they introduced a new subfamily of standard   tableaux, called Richardson tableaux, and showed that $\mathcal{B}_T$ is  a Richardson variety $R_{v_T,w_T}$ (where $v_T$ and $w_T$ are permutations determined by $T$) if and only if $T$ is a Richardson tableau \cite{KP}. This family  includes as special cases previous work on  Springer fibers studied for example in 
\cite{BZ-o,Gue,GZ}.
When $T$ is a Richardson tableau, the Schubert class expansion of  $[\mathcal{B}_T]$   is equivalent to  computing the  Schubert structure constants:
\[
[R_{v_T, w_T}]=\sum_{u} c^{w_T}_{v_T,u} [X^u].
\]
Spink and  Tewari \cite{ST} later  discovered that such   structure constants can be   combinatorially computed by   iterating the Pieri rule  due to Sottile \cite{Sottile}. This gives an answer to  Springer's problem in the case when $T$ is a Richardson tableau.

Various properties concerning Richardson tableaux have been developed by Karp and Precup \cite{KP}. Their enumeration is surprisingly elegant:  the number of Richardson tableaux of size $n$ is equal to the number of Motzkin paths with $n$ steps. Their proof is algebraic, relying on the generating function of Richardson tableaux. Karp and Precup \cite[Problem 4.5]{KP} asked for an explicit correspondence  between Richardson tableaux and  Motzkin paths. In this note, we present  such a construction which is achieved  by applying the Robinson--Schensted (RS) algorithm to noncrossing partial matchings. 
From the RS algorithm's point of view, some properties of Richardson tableaux are fairly natural. Using this correspondence, we relate a $q$-counting of Richardson tableaux to $q$-Catalan numbers.   We hope that the RS algorithm could shed some new   light on the  study of the geometry of Springer fibers.

We proceed to introduce some notions needed to describe our results. 
A partial matching on $[n]$ is a set partition of $[n]$ such that each block contains either a single number or exactly two numbers. Notice that partial matchings on $[n]$ can be identified  with involutions among the permutations of $[n]$. That is, for a partial matching on $[n]$, the associated  involution $w$ is defined by setting $w(i)=i$ if $\{i\}$ is a singleton, and $w(i)=j$ if $\{i,j\}$ is a two-element block. For example, the involution corresponding to the partial matching  $\{\{1,5\}, \{2\}, \{3,4\}, \{6\}, \{7,8\}\}$ is $52431687$.

We may represent a partial matching on $[n]$ by its diagram. Specifically, list  $n$ dots labeled   $1,2,\ldots,n$ from left to right, and draw an arc between $i$ and $j$  if ${i,j}$ lie in the same block. 
For example, Figure \ref{fig-1} illustrates  the diagrams of the partial matchings $\{\{1,5\}, \{2\}, \{3,4\}, \{6\}, \{7,8\}\}$ and $\{\{1,5\}, \{2\}, \{3,6\}, \{4,8\}, \{7\}\}$. 
\begin{figure}[h t]
  \begin{center}
    \begin{tikzpicture}[>=stealth,scale=0.6]
    \foreach \i in {1,...,8} {
        \node at (\i,0) {\i};
        \fill (\i,0.4) circle (1.2pt); 
    }

    \draw[-,bend left=40] (1,0.4) to (5,0.4);
    \draw[-,bend left=30] (3,0.4) to (4,0.4);
    \draw[-,bend left=30] (7,0.4) to (8,0.4);

\end{tikzpicture}
\qquad
\begin{tikzpicture}[>=stealth,scale=0.6]
    \foreach \i in {1,...,8} {
        \node at (\i,0) {\i};
        \fill (\i,0.4) circle (1.2pt); 
    }

    \draw[-,bend left=40] (1,0.4) to (5,0.4);
    \draw[-,bend left=30] (3,0.4) to (6,0.4);
    \draw[-,bend left=30] (4,0.4) to (8,0.4);

\end{tikzpicture}
\end{center}
\caption{Two partial matchings on $[8]$.}\label{fig-1}
\end{figure}
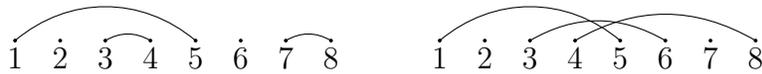
We  use $(i,j)$, where $i<j$, to denote an arc connecting $i$ and $j$. 
Two arcs $(i_1,j_1)$ and $(i_2, j_2)$ form a crossing if $i_1<i_2<j_1<j_2$, and form a nesting if $i_1<i_2<j_2<j_1$. Crossings and nestings were extensively studied in the work  of Chen, Deng, Du, Stanley and Yan \cite{Chen-1}. A partial matching is called noncrossing if it has no crossing. For example, the left picture in Figure \ref{fig-1} is  noncrossing. Noncrossing partial matchings have been  employed  as a basic combinatorial model to study the RNA secondary structures \cite{Chen}. Noncrossing matchings have also appeared in total nonnegativity, see for example   the stratification of the   space of cactus networks \cite{Lam}.

As a classical result, the RS algorithm associates with a permutation $w$ of $[n]$   a pair  $(T,T')$ of standard Young tableaux of size $n$ with the same shape, where $T$ (resp., $T'$) is  called the insertion (resp., recording)  tableau  of $w$. Denote $w\xrightarrow{\mathrm{RS}}(T,T')$.   Sch\"utzenberger's theorem \cite{Schu-1} says that  the inverse $w^{-1}$ of $w$ satisfies $w^{-1}\xrightarrow{\mathrm{RS}}(T',T)$.   This symmetry  implies that $w$ is an involution if and only if $T=T'$. Consequently,   the RS algorithm sets up a bijection between involutions of $[n]$ are   standard  tableaux of size $n$. 

In the following, we shall not distinguish between a partial matching on $[n]$ and an involution of $[n]$. For instance,  the insertion tableau of a partial matching refers to the insertion tableau of the corresponding involution, a noncrossing involution means an involution to which the corresponding partial matching is noncrossing. 

Our main result is stated as follows. 

\begin{theo}\label{thm-1}
    The set of insertion tableaux of noncrossing partial matchings on $[n]$ is exactly the set of Richardson tableaux of size $n$ (the definition of a Richardson tableau will be given in Section \ref{sect-2}).  
\end{theo}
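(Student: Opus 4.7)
The plan is to prove Theorem \ref{thm-1} by combining a cardinality count with a single inclusion.

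For the cardinality, I would first invoke the classical bijection between noncrossing partial matchings on $[n]$ and Motzkin paths with $n$ steps: reading the matching from left to right, encode each index as an up step if it is the smaller element of an arc, a down step if it is the larger element of an arc, and a flat step if it is a singleton. Hence the number of noncrossing partial matchings on $[n]$ is the $n$th Motzkin number $M_n$. By Sch\"utzenberger's theorem, the map sending an involution $w$ of $[n]$ to its insertion tableau $T$ (where $w \xrightarrow{\mathrm{RS}} (T,T)$) is a bijection onto standard Young tableaux of size $n$. Its restriction to noncrossing involutions is therefore injective, and the set of insertion tableaux of noncrossing partial matchings on $[n]$ has cardinality $M_n$. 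On the other hand, Karp and Precup proved that the number of Richardson tableaux of size $n$ is also $M_n$, so the two sets in the theorem have equal cardinality.

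It then suffices to prove one inclusion, and I would aim to show that the insertion tableau of every noncrossing partial matching is a Richardson tableau, by induction on $n$ using the recursive structure of such matchings. Any noncrossing partial matching on $[n]$ falls into one of two cases: either $n$ is a singleton, or $n$ is the right endpoint of an arc $(i,n)$ whose removal splits the remaining arcs into two noncrossing sub-matchings, one on $[i-1]$ and one on $\{i+1,\ldots,n-1\}$. This mirrors the first-return decomposition of the corresponding Motzkin path. In the singleton case, $w(n)=n$, so RS insertion of $n$ simply appends a box containing $n$ to the first row of the tableau of the smaller matching. In the arc case, the one-line notation of the involution is obtained by interleaving the one-line notations of the two sub-involutions with $n$ placed at position $i$ and $i$ placed at position $n$, and I would carry out a careful analysis of the resulting bumping sequences in order to describe the final tableau in terms of those produced by the two sub-matchings.

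The main obstacle is verifying at each inductive step that the resulting tableau lies in the Richardson family. This requires extracting from the noncrossing condition a precise combinatorial consequence for the insertion tableau and checking it against the defining property of a Richardson tableau to be given in Section \ref{sect-2}. The key technical ingredient will be a clean description of the effect on the insertion tableau of closing off an outer arc $(i,n)$, together with a verification that this operation preserves the Richardson property; once the inclusion is established, the equality of cardinalities $M_n=M_n$ forces the two sets to coincide, and as a by-product one obtains the explicit bijection between noncrossing partial matchings (equivalently, Motzkin paths) and Richardson tableaux requested by Karp and Precup.
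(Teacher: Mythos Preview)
Your strategy of proving one inclusion and then invoking the Motzkin count $|\mathrm{RT}(n)|=M_n$ from Karp--Precup is logically valid and does shorten the argument: the paper instead proves both directions of the bijection by hand, precisely so that the equality $|\mathrm{RT}(n)|=M_n$ is obtained bijectively rather than imported. So your route trades self-containedness for brevity.

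For the inclusion itself, your last-element decomposition is really the paper's prime decomposition in disguise, and the word ``interleaving'' is misleading. If $n$ is the right endpoint of the arc $(i,n)$, then every value $w(1),\ldots,w(i-1)$ lies in $[i-1]$ and every value $w(i),\ldots,w(n)$ lies in $\{i,\ldots,n\}$; thus $w=u_1\oplus v$ is a genuine direct sum, where $u_1=w|_{[i-1]}$ and $v$ is a \emph{prime} noncrossing involution of $[n-i+1]$ (it has the outer arc $(1,n-i+1)$). By the standard fact that RS respects direct sums, the insertion tableau of $w$ is the concatenation $T_{u_1}\circ T_v$, and Karp--Precup's Lemma~3.9 says concatenation preserves the Richardson property. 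So your two sub-matchings never interact under RS; there is no bumping analysis to do between them.

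What remains---and what you flag as ``the main obstacle''---is exactly the paper's prime case: for a prime noncrossing involution $v$ on $[m]$ one has $v=m\cdot v'\cdot 1$ with $v'$ noncrossing on $\{2,\ldots,m-1\}$, and the insertion tableau of $v$ is obtained from that of $v'$ by first adjoining $m$ as a new singleton bottom row and then inserting $1$, which simply shifts the first column down one box. Checking that these two moves preserve the Richardson condition is routine. Once you see that your arc case factors through this, your ``careful analysis of the resulting bumping sequences'' collapses to this single observation, and your proof of the inclusion becomes essentially the forward half of the paper's argument.
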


We give     applications of Theorem \ref{thm-1}. The first one  explains that Theorem \ref{thm-1} gives an answer to  \cite[Problem 4.5]{KP} raised  by Karp and Precup.

\begin{re}\label{re-0jy}
There is an obvious bijection between noncrossing partial matchings on $[n]$ and Motzkin paths with $n$ steps. Recall that a Motzkin path with $n$ steps is a path  from $(0,0)$ to $(n,0)$, using possibly   up steps $(1,1)$, down steps $(1,-1)$ and horizontal steps $(1,0)$, which never goes strictly below   the $x$-axis.  Given a noncrossing partial  matching   on $[n]$, we construct  a Motzkin path   as follows. For $1\leq i\leq n$, the  $i$-th step of the path  is $(1,1)$ (resp., $(1,-1)$) if $i$ is a left  (resp., right) endpoint  of an arc, and is $(1,0)$ if $\{i\}$ is a singleton.  So Theorem \ref{thm-1} yields an explicit  bijection between  Motzkin paths and  Richardson tableaux, answering \cite[Problem 4.5]{KP}.
\end{re}

As the  main result in Section 3 of  \cite{KP}, it was shown that the set of Richardson tableaux is invariant  under the evacuation map.    This phenomenon will be clear from Theorem \ref{thm-1}. 
Let $\mathrm{evac}(T)$ denote  the evacuation  tableau of $T$, see Section \ref{sect-2} for the definition. 

\begin{coro}[{\cite[Corollary 3.23]{KP}}]\label{coro-2}
The set of Richardson tableaux is closed under the evacuation map. 
That is, if $T$ is a Richardson tableau, then so is $\mathrm{evac}(T)$. 

\end{coro}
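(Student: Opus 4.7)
The plan is to deduce Corollary~\ref{coro-2} from Theorem~\ref{thm-1} by combining it with a classical symmetry of the Robinson--Schensted correspondence. By Theorem~\ref{thm-1}, it suffices to show that the set of insertion tableaux of noncrossing involutions of $[n]$ is closed under evacuation. Equivalently, whenever $w$ is a noncrossing involution with $w\xrightarrow{\mathrm{RS}}(T,T)$, I must exhibit another noncrossing involution whose insertion tableau equals $\mathrm{evac}(T)$.

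The main ingredient I would invoke is Sch\"utzenberger's identity relating evacuation to conjugation by the longest element $w_0$ of $[n]$: if $w\xrightarrow{\mathrm{RS}}(P,Q)$, then $w_0 w w_0\xrightarrow{\mathrm{RS}}(\mathrm{evac}(P),\mathrm{evac}(Q))$. Since conjugation by $w_0$ sends involutions to involutions, taking $w':=w_0 w w_0$ yields an involution satisfying $w'\xrightarrow{\mathrm{RS}}(\mathrm{evac}(T),\mathrm{evac}(T))$. The remaining task is to verify that $w'$ is noncrossing whenever $w$ is. From the formula $(w_0 w w_0)(i)=n+1-w(n+1-i)$, the partial matching of $w'$ is obtained from that of $w$ by reflecting the diagram through its vertical axis: each arc $(i,j)$ is sent to $(n+1-j,\,n+1-i)$, and each fixed point $i$ is sent to $n+1-i$. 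Under the order-reversing substitution $k\mapsto n+1-k$, the crossing condition $i_1<i_2<j_1<j_2$ is manifestly preserved (after relabelling which arc is listed first), so $w'$ is noncrossing iff $w$ is.

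Putting the pieces together: starting from a Richardson tableau $T$, Theorem~\ref{thm-1} produces a noncrossing partial matching $M$ whose insertion tableau is $T$; its reflected matching $M'$ is again noncrossing, and by Sch\"utzenberger's identity has insertion tableau $\mathrm{evac}(T)$; a second application of Theorem~\ref{thm-1} then certifies $\mathrm{evac}(T)$ as Richardson. I do not expect any real obstacle: both Sch\"utzenberger's identity and the reflection symmetry of noncrossing matchings are standard, and the point of the argument is that Theorem~\ref{thm-1} translates an a priori algebraic closure property of Richardson tableaux into the transparent reflection symmetry of matching diagrams.
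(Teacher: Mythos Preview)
Your proposal is correct and follows essentially the same argument as the paper: both use Sch\"utzenberger's identity $w_0ww_0\xrightarrow{\mathrm{RS}}(\mathrm{evac}(T),\mathrm{evac}(T))$, observe that $w_0ww_0$ is the reflection of the matching diagram through its vertical axis and hence remains noncrossing, and then invoke Theorem~\ref{thm-1} to conclude that $\mathrm{evac}(T)$ is Richardson.
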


\begin{proof}
Let $w_0=n\cdots 21$ be the longest permutation of $[n]$.  It is  known that if $w\xrightarrow{\mathrm{RS}}(T,T')$, then $w_0ww_0\xrightarrow{\mathrm{RS}}(\mathrm{evac}(T),\mathrm{evac}(T'))$ \cite[Theorem A1.2.10]{Stanley}.
Assume now that  $w$ is a noncrossing involution. 
Note that $w_0ww_0$ is   also an involution. Moreover, the diagram of $w_0ww_0$ is obtained from that of $w$ after  a reflection along a vertical line passing  the midpoint, see Figure \ref{fig-1x} for an illustration.
\begin{figure}[h t]
  \begin{center}
    \begin{tikzpicture}[>=stealth,scale=0.6]
    \foreach \i in {1,...,8} {
        \node at (\i,0) {\i};
        \fill (\i,0.4) circle (1.2pt); 
    }

    \draw[-,bend left=40] (1,0.4) to (5,0.4);
    \draw[-,bend left=30] (3,0.4) to (4,0.4);
    \draw[-,bend left=30] (7,0.4) to (8,0.4);
    \draw[very thick] (4.5,-1) -- (4.5,1.5);
\end{tikzpicture}
\qquad
\begin{tikzpicture}[>=stealth,scale=0.6]
    \foreach \i in {1,...,8} {
        \node at (\i,0) {\i};
        \fill (\i,0.4) circle (1.2pt); 
    }

    \draw[-,bend left=40] (1,0.4) to (2,0.4);
    \draw[-,bend left=30] (4,0.4) to (8,0.4);
    \draw[-,bend left=30] (5,0.4) to (6,0.4);
    \draw[very thick] (4.5,-1) -- (4.5,1.5);

\end{tikzpicture}
\end{center}
\caption{Diagrams of $w$ and $w_0ww_0$ for $w=52431687$.}\label{fig-1x}
\end{figure}
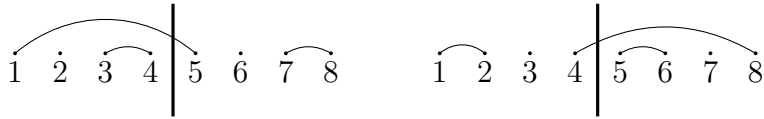
This implies  $w_0ww_0$ is also   noncrossing. By Theorem \ref{thm-1}, $\mathrm{evac}(T)$, as the insertion tableau of $w_0ww_0$, is a Richardson tableau. 
\end{proof}

A prime Richardson tableau is a Richardson tableau which cannot be decomposed into a concatenation of smaller Richardson tableaux, see Section \ref{sect-3} for detailed information. For $n\geq 2$, the shape of a prime Richardson tableau must be of the form    $\lambda=(\lambda_1,\ldots, \lambda_{\ell-2},1,1)$, that is, the last two rows have length equal to one \cite[Corollary 3.15]{KP}.  Theorem \ref{thm-1} leads to an alternative explanation of  the following  relation.

\begin{coro}[{\cite[Corollary 4.3]{KP}}]\label{coro-22}
The number of prime Richardson tableaux of shape $\lambda=(\lambda_1,\ldots, \lambda_{\ell-2},1,1)$ equals the number of
 Richardson tableaux of shape $(\lambda_1,\ldots, \lambda_{\ell-2})$. 
\end{coro}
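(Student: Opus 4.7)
The plan is to pull back the primality condition along the bijection in Theorem~\ref{thm-1}, reducing the corollary to a statement about noncrossing matchings. Since concatenation of Richardson tableaux translates (via Remark~\ref{re-0jy}) to concatenation of Motzkin paths, a prime Richardson tableau of size $n\ge 2$ corresponds to a noncrossing matching whose Motzkin path touches the $x$-axis only at its two endpoints. I will first show that any such indecomposable matching on $[n]$ must be of the form $M=\{\{1,n\}\}\sqcup(M'+1)$ for some noncrossing matching $M'$ on $[n-2]$, where $M'+1$ denotes the matching obtained by shifting every label of $M'$ up by one: the Motzkin path begins with an up step and ends with a down step, and the arc leaving position~$1$ can close only at the first return of the path to height~$0$, which is forced to be position~$n$.

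The core of the argument is a shape calculation for the $\mathrm{RS}$ insertion tableau. Let $w'$ be the involution on $[n-2]$ associated with $M'$ and let $w$ be the involution on $[n]$ associated with $M$, so that the one-line notation of $w$ is $(n,\,w'(1)+1,\ldots,w'(n-2)+1,\,1)$. I analyze its row insertion in three stages. After the first letter the tableau consists of a single box labeled $n$. Inserting the middle block, whose entries all lie in $\{2,\ldots,n-1\}$, is shown to produce a tableau that agrees in rows $1$ through $\ell$ with the insertion tableau of $w'$ shifted up by $1$, with $n$ sitting alone in a new row $\ell+1$. Finally, the trailing $1$ is strictly smaller than every existing entry, so its row insertion triggers a bumping chain straight down the first column, which pushes $n$ out of row $\ell+1$ into a new row $\ell+2$. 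Consequently, if $w'$ has insertion shape $\mu=(\mu_1,\ldots,\mu_\ell)$, then $w$ has insertion shape $(\mu_1,\ldots,\mu_\ell,1,1)$.

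Putting these ingredients together with Theorem~\ref{thm-1}, the assignment $M'\mapsto\{\{1,n\}\}\sqcup(M'+1)$ yields the desired bijection between Richardson tableaux of shape $(\lambda_1,\ldots,\lambda_{\ell-2})$ and prime Richardson tableaux of shape $(\lambda_1,\ldots,\lambda_{\ell-2},1,1)$. The main technical obstacle will be the second stage of the shape calculation, namely the claim that row-inserting a sequence of entries all strictly less than $n$ on top of the one-box tableau containing $n$ produces the insertion tableau of the sequence with $n$ adjoined as a new last row of length one. I plan to prove this by induction on the length of the sequence, examining in the inductive step whether the next row insertion reaches the previously created bottom row or terminates earlier. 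Once this is established, the third stage is routine, since inserting the global minimum always extends the first column by a single box.
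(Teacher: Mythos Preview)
Your proposal is correct and follows essentially the same route as the paper: Remark~\ref{re-xj} combines Corollary~\ref{gh-12} (prime noncrossing involutions $\leftrightarrow$ prime Richardson tableaux) with the Case~(2) analysis in the proof of Theorem~\ref{hg-x}, and your three-stage RS calculation is exactly that Case~(2) argument. One minor correction: the fact that concatenation of tableaux corresponds to direct sum of involutions is Lemma~\ref{lem-23}, not Remark~\ref{re-0jy}.
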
  

\begin{proof}
    See Remark \ref{re-xj}.
\end{proof}

We say that a column of a standard tableau is odd (resp., even) if it has an odd (resp., even) number of boxes. Let $C_n=\frac{1}{n+1}{2n \choose n}$ be the $n$-th Catalan number. 

\begin{coro}\label{coro-oy}
Let $0\leq k\leq n$ with $k \equiv n \pmod{2}$.    The number of Richardson tableaux of size $n$ and with $k$ odd columns   is equal to 
\begin{align}\label{bg-o0}
    {n\choose k} C_{\frac{n-k}{2}}.
\end{align}
\end{coro}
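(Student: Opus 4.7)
The plan is to combine Theorem~\ref{thm-1} with two classical facts to translate the problem into a straightforward count of noncrossing partial matchings with a prescribed number of singletons. First, by Theorem~\ref{thm-1}, a Richardson tableau of size $n$ is precisely the insertion tableau $P(w)$ of some noncrossing involution $w$ on $[n]$. Second, I would invoke the classical theorem (due to Sch\"utzenberger) that for any involution $w$, the number of fixed points of $w$ equals the number of odd-length columns of $P(w)$. Together, these reduce the corollary to counting noncrossing partial matchings on $[n]$ that have exactly $k$ singletons (and hence $(n-k)/2$ arcs, which forces $k\equiv n\pmod{2}$, matching the hypothesis).

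Next I would decompose such a matching as a pair: (i) the $k$-subset of $[n]$ specifying the positions of the singletons, and (ii) a noncrossing perfect matching on the remaining $n-k$ elements (equipped with their induced linear order). This is a bijection: restricting a noncrossing partial matching on $[n]$ to its non-singleton elements yields a noncrossing perfect matching, and conversely reinserting the singletons at their original positions introduces no crossings, since a singleton is never an endpoint of any arc.

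Step (i) contributes $\binom{n}{k}$, while step (ii) contributes $C_{(n-k)/2}$ by the classical fact that noncrossing perfect matchings on $2m$ linearly ordered points are counted by the Catalan number $C_m$. Multiplying gives the claimed $\binom{n}{k}C_{(n-k)/2}$. The only real content beyond Theorem~\ref{thm-1} is the Sch\"utzenberger identity relating fixed points of an involution to odd columns of its insertion tableau; everything else is a standard bijective Catalan count, so I do not expect any genuine obstacle.
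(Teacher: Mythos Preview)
Your proposal is correct and follows essentially the same argument as the paper: invoke Theorem~\ref{thm-1} together with Sch\"utzenberger's result that fixed points of an involution correspond to odd columns of its insertion tableau, then count noncrossing partial matchings on $[n]$ with $k$ singletons as $\binom{n}{k}C_{(n-k)/2}$ by choosing the singleton positions and a noncrossing perfect matching on the rest.
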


\begin{proof}
A theorem due to Sch\"utzenberger \cite{Schu}   states that the  number of fixed points of an involution  equals the number of odd columns in its insertion tableau, see also \cite{Bei}. So it is equivalent to showing that the  number of noncrossing partial matchings on $[n]$ with $k$ singletons  is ${n\choose k} C_{\frac{n-k}{2}}.$ To see this, there are ${n\choose k}$ ways to choose $k$ elements of $[n]$ as singletons, and    the remaining $n-k$ elements form  noncrossing perfect matchings which are counted by the Catalan number $C_{\frac{n-k}{2}}$.
\end{proof}

We also consider   $q$-anologues of Corollary \ref{coro-oy}, which turn out to be  closely related to $q$-Catalan numbers. It is worth pointing out that $q$-Catalan numbers (more generally, rational $q$-Catalan numbers) have emerged in counting points of certain Richardson varieties   over a finite field  by Galashin and Lam \cite{GL}. 
For a permutation $w$ of $[n]$, a position  $i\in [n-1]$ is  a descent (resp.,  ascent) if $w(i)>w(i+1)$ (resp., $w(i)<w(i+1)$). Let $\mathrm{maj}(w)$ (resp., $\mathrm{comaj}(w)$) be the sum of descents  (resp., ascents) of $w$.   For a standard  tableau $T$ of size $n$, an entry $i\in [n-1]$ is a descent of $T$ if $i+1$  appears in a row strictly below $i$, and otherwise, we call $i$  an ascent. Similarly,  we use $\mathrm{maj}(T)$  (resp., $\mathrm{comaj}(T)$) to denote   the  sum of   descents (resp., ascents) of $T$. Clearly,  \[
\mathrm{maj}(w)+\mathrm{comaj}(w)=\mathrm{maj}(T)+\mathrm{comaj}(T)={n\choose 2}.
\]

A Richardson tableau is called  even if it has no odd columns. 
By Corollary \ref{coro-oy}, an even  Richardson tableau must have even size. 
For $m\geq 0$, let $[m]_q=\frac{1-q^m}{1-q}=1+q+\cdots+q^{m-1}$, and $[m]_q!=[1]_q[2]_q\cdots [m]_q$. We establish  the following $q$-counting for even Richardson tableaux.

\begin{coro}
For $n\geq 1$,  write  $\mathrm{ERT}(2n)$ for  the set of even Richardson tableaux of size $2n$. Then we have
\begin{align}\label{qcata}
\sum_{T\in \mathrm{ERT}(2n)} q^{\mathrm{comaj}(T)}= \frac{1}{[n+1]_q}  {2n \brack n}_q=\frac{[2n]_q!}{[n+1]_q!\cdot [n]_q!}, 
\end{align}
which is the $n$-th $q$-Catalan number. For $0\leq m\leq n-1$, let $\mathrm{ERT}(2n,m)$ denote the set of even Richardson tableaux of size $2n$ with $m$ ascents. Then we have the following refinement via the $q$-Narayana number:  
\begin{align}\label{qnara}
\sum_{T\in \mathrm{ERT}(2n,m)} q^{\mathrm{comaj}(T)}= \frac{1}{[n]_q}   {n \brack m }_q  {n \brack m+1 }_q q^{m^2+m}. 
\end{align}
\end{coro}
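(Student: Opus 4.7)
The plan is to transfer the generating function from even Richardson tableaux to Dyck paths in two successive steps, and then invoke a classical $q$-analogue of the Narayana identity.

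First, I would identify $\mathrm{ERT}(2n)$ with a simpler combinatorial family. Combining Theorem \ref{thm-1} with the Sch\"utzenberger fixed-point theorem already used in the proof of Corollary \ref{coro-oy}, an even Richardson tableau of size $2n$ (no odd column) corresponds under the RS bijection to a fixed-point-free noncrossing involution on $[2n]$, i.e.\ a noncrossing perfect matching. Via Remark \ref{re-0jy}, noncrossing perfect matchings on $[2n]$ biject with Dyck paths $D$ of length $2n$: the $i$-th step of $D$ is up when $i$ is a left endpoint of an arc and down when $i$ is a right endpoint.

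Second, I would translate the comajor statistic. The classical descent property of RS (see \cite[Lemma 7.23.1]{Stanley}) gives $\mathrm{Des}(w)=\mathrm{Des}(Q)$ whenever $w\xrightarrow{\mathrm{RS}}(P,Q)$; applied to an involution $w$, where $P=Q=T$, this yields $\mathrm{comaj}(T)=\mathrm{comaj}(w)$. A case-by-case check on the four possible types $(L,L)$, $(L,R)$, $(R,L)$, $(R,R)$ of a consecutive pair $(i,i+1)$ in a noncrossing matching shows that the noncrossing condition forces a descent in every case except $(R,L)$, where it forces an ascent. In Dyck-path language, the ascents of $w$ are exactly the valleys of $D$; consequently $\mathrm{comaj}(T)=\sum_{i\in V(D)}i$ and the number of ascents of $T$ equals $|V(D)|$.

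Third, (\ref{qnara}) becomes the identity
\[
\sum_{\substack{D\in\mathrm{Dyck}_{2n}\\|V(D)|=m}} q^{\sum_{i\in V(D)}i}=\frac{1}{[n]_q}{n \brack m}_q{n \brack m+1}_q q^{m^2+m},
\]
a known refined $q$-Narayana identity; the factor $q^{m^2+m}$ records the minimum valley-sum $2+4+\cdots+2m$, attained when the valleys occupy positions $2,4,\ldots,2m$. Summing over $m$ yields the $q$-Catalan formula (\ref{qcata}). I expect the main obstacle to be this third step: locating or re-proving the above refined $q$-Narayana identity. A natural approach is to decompose a Dyck path at its first return to the $x$-axis and track how the valley-sum splits, or to adapt a standard Dyck-path/pair-of-lattice-paths bijection so that it respects the valley-sum weighting. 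The first two steps are essentially translations, so the substantive combinatorial content lies entirely in this final identity.
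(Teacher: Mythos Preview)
Your proposal is correct and follows essentially the same route as the paper: identify $\mathrm{ERT}(2n)$ with noncrossing perfect matchings (hence Dyck paths), use \cite[Lemma 7.23.1]{Stanley} together with the valley/ascent correspondence to convert $\mathrm{comaj}(T)$ into the valley-sum $\mathrm{maj}(D)$, and then invoke the known $q$-Catalan and $q$-Narayana formulas. The paper handles your ``main obstacle'' simply by citing MacMahon \cite{Mac} and F\"urlinger--Hofbauer \cite{FH} for both identities, so no new argument is needed there.
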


\begin{proof}
We first prove    \eqref{qcata}. Let $T$ be an even Richardson tableau of size $2n$, which, by the proof of Corollary \ref{coro-oy},  corresponds to   a noncrossing involution $w$ of $[2n]$ without fixed points.  By Remark \ref{re-0jy}, $w$ corresponds to a Dyck path  $D=D_1D_2\cdots D_{2n}$ where  $D_i$ is either an up step $(1,1)$ or a down step $(1,-1)$. We say $i\in [2n-1]$ is a valley point if $D_i=(1,-1)$ and $D_{i+1}=(1,1)$. Let $\mathrm{maj}(D)$ denote the sum of valley points of $D$. A famous   $q$-counting  due to MacMahon \cite{Mac} (see also \cite{FH}) says that
\begin{align}\label{qcata1}
    \sum_{D} q^{\mathrm{maj}(D)}=\frac{1}{[n+1]_q}  {2n \brack n }_q,
\end{align}
where the sum runs over Dyck paths with $2n$ steps. 

By \cite[Lemma 7.23.1]{Stanley},  it is known that $i$ is a deccent of $w$ if and only if it is a descent of $T$, and so we have   $\mathrm{maj}(w)=\mathrm{maj}(T)$, or equivalently, $\mathrm{comaj}(w)=\mathrm{comaj}(T)$. 
Moreover, we have the  following crucial observation: 
\[
\text{$i$ is a valley point of $D$ if and only if it is an ascent of $w$.}
\]
Therefore,  we have  $\mathrm{maj}(D)=\mathrm{comaj}(w)$, and hence 
 $\mathrm{maj}(D)=\mathrm{comaj}(T)$. This, along with \eqref{qcata1}, leads to \eqref{qcata}. 

To conclude the identity in \eqref{qnara}, it is enough to notice  that 
 \begin{align}\label{qna} 
    \sum_{D} q^{\mathrm{maj}(D)}=\frac{1}{[n]_q}   {n \brack m }_q  {n \brack m+1 }_q q^{m^2+m}, 
\end{align}
where $D$ is restricted to Dyck paths with $2n$ steps and $m$ valley points. The formula in \eqref{qna} was stated by MacMahon without proof, and see \cite[Section 4]{FH} for a proof. 
\end{proof}

We conjecture that Corollary \ref{coro-oy} has the following $q$-analogue which has been checked  for $n$ up to $14$. 

\begin{conj}
For $0\leq k\leq n$ with $k \equiv n \pmod{2}$,  let $\mathrm{RT}(n,k)$ denote the set of   Richardson tableaux of size $n$ and with $k$ odd columns.
Then 
\begin{align}\label{conj-o}
 \sum_{T\in \mathrm{RT}(n,k)}q^{\mathrm{comaj}(T)}=q^{k\choose 2} \frac{}{} {n \brack k}_q C_{\frac{n-k}{2}}(q), 
\end{align}
where $C_m(q)=\frac{1}{[m+1]_q}  {2m \brack m }_q$ denotes the $m$-th $q$-Catalan number. Equivalently,
\begin{align*} 
 \sum_{w}q^{\mathrm{comaj}(w)}=q^{k\choose 2} \frac{}{} {n \brack k}_q C_{\frac{n-k}{2}}(q), 
\end{align*}
where the sum is over all noncrossing involutions of $[n]$ with exactly $k$ fixed points. 
\end{conj}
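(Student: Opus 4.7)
By Theorem~\ref{thm-1} and Sch\"utzenberger's theorem (fixed points equal odd columns), the RS algorithm restricts to a bijection between $\mathrm{RT}(n,k)$ and noncrossing involutions $w$ of $[n]$ with exactly $k$ fixed points; this correspondence preserves descents by \cite[Lemma 7.23.1]{Stanley}. Combined with Remark~\ref{re-0jy}, the conjecture becomes the Motzkin-path identity
$$\sum_{M}q^{\mathrm{comaj}(M)}=q^{\binom{k}{2}}{n\brack k}_q C_{(n-k)/2}(q),$$
where the sum runs over Motzkin paths $M=X_1\cdots X_n$ with exactly $k$ horizontal ($H$) steps and $\mathrm{comaj}(M)$ is the sum of the ascent positions of the associated involution $w$.

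A case analysis on the nine possibilities for $(X_i,X_{i+1})\in\{U,D,H\}^2$, using the noncrossing structure to identify $w(i)$ and $w(i+1)$, shows that $i$ is an ascent of $w$ if and only if $X_i\in\{D,H\}$ and $X_{i+1}\in\{U,H\}$. Expanding $[\text{ascent at }i]=(1-[X_i=U])(1-[X_{i+1}=D])$ and summing over $i$, using that the $U$-positions, $D$-positions and $H$-positions partition $[n]$, produces the clean formula
$$\mathrm{comaj}(M)=\sum_{s\in S}s+\sum_{p\in P_{UD}(M)}p-k-m,$$
where $S\subseteq[n]$ is the set of $H$-positions of $M$, $P_{UD}(M):=\{i:X_i=U,\,X_{i+1}=D\}$ is the set of peaks of $M$, and $m=(n-k)/2$. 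An elementary telescoping on the $U^{a_1}D^{b_1}\cdots U^{a_r}D^{b_r}$ block decomposition also yields the pointwise identity $\sum_{p\in P_D}p=\mathrm{maj}(D)+m$ for any Dyck path $D$ of length $2m$.

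The heart of the argument is to evaluate, for each fixed $D$, the sum $\sum_S q^{\sum_{s\in S}s+\sum_{p\in P_{UD}(M(S,D))}p}$, where $M(S,D)$ is the Motzkin path with $H$-positions $S$ and underlying Dyck subpath $D$. Encoding $S$ by the weak composition $g=(g_0,\ldots,g_{2m})$ of $k$ that records the number of $H$'s in each ``gap'' between consecutive Dyck steps of $D$, one computes $\sum_{s\in S}s=\binom{k+1}{2}+\sum_j j g_j$ and $\sum_{p\in P_{UD}(M)}p=\sum_{p\in P_D,\,g_p=0}(p+G_p)$ with $G_p:=g_0+\cdots+g_{p-1}$. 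Stratifying by the set $T\subseteq P_D$ of \emph{surviving peaks} (those with $g_p=0$) and substituting $h_p:=g_p-1\ge 0$ for $p\in P_D\setminus T$ reduces the stratum with prescribed $T$ to an unrestricted nonnegative composition problem in $2m+1-|T|$ variables, weighted by $l+N_T(l)$ for $l\in[0,2m]\setminus T$, where $N_T(l):=|\{p\in T:p>l\}|$.

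The main technical obstacle is the observation that these weights form exactly the consecutive integer set $\{|T|,|T|+1,\ldots,2m\}$: since any two peaks of a Dyck path are separated by at least two positions, no two elements of $T$ are consecutive, so the weight function $l+N_T(l)$ steps up by exactly one each time $l$ advances through $[0,2m]\setminus T$. With this identification, the inner sum collapses to $q^{|T|K}{K+2m-|T|\brack K}_q$ with $K:=k+|T|-|P_D|$. Summing over $T\subseteq P_D$ of size $t$ via $\sum q^{I(T)}={|P_D|\brack t}_q$ (where $I(T):=\sum_{p\in P_D\setminus T}N_T(p)$) and then invoking the $q$-Vandermonde identity yields $q^{\binom{k+1}{2}+\sum_{p\in P_D}p}{n\brack k}_q$. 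Summing over Dyck paths $D$ using MacMahon's $\sum_D q^{\mathrm{maj}(D)}=C_m(q)$ and $\sum_{p\in P_D}p=\mathrm{maj}(D)+m$ finally produces $q^{\binom{k}{2}}{n\brack k}_q C_m(q)$, completing the proof.
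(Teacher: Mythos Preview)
The paper states this result as an open conjecture (checked only for $n\le 14$) and gives no proof, so there is nothing to compare against.  Your argument is not a restatement of anything in the paper---it is a genuine attempt to \emph{prove} the conjecture, and after checking each step I believe it succeeds.

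The reduction to Motzkin paths and the nine-case verification that $i$ is an ascent of $w$ iff $X_i\in\{D,H\}$ and $X_{i+1}\in\{U,H\}$ are both correct (the noncrossing condition forces the nesting needed in the $UU$, $DD$, and $HD$ cases).  Your closed formula
\[
\mathrm{comaj}(M)=\sum_{s\in S}s+\sum_{p\in P_{UD}(M)}p-k-m
\]
then follows from the expansion of $(1-[X_i{=}U])(1-[X_{i+1}{=}D])$ together with $X_1\neq D$, $X_n\neq U$, and the partition of $[n]$ into $U$-, $D$-, and $H$-positions; I verified it on all Motzkin paths with $n\le 4$.  The gap-composition encoding giving $\sum_{s\in S}s=\binom{k+1}{2}+\sum_j jg_j$ is standard, and the identification of $P_{UD}(M)$ with $\{p\in P_D:g_p=0\}$ at shifted position $p+G_p$ is immediate from the encoding.

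The crucial and genuinely clever step is the observation that the multiset of weights $\{l+N_T(l):l\in[0,2m]\setminus T\}$ is exactly $\{|T|,|T|+1,\dots,2m\}$.  Your justification is correct: since $T\subseteq P_D\subseteq[1,2m-1]$ contains no two consecutive integers, passing from one element of $[0,2m]\setminus T$ to the next either increases $l$ by $1$ with $N_T$ unchanged, or increases $l$ by $2$ with $N_T$ dropping by $1$; either way the weight rises by exactly $1$, starting from $0+N_T(0)=|T|$ and ending at $2m+0=2m$.  This collapses the inner composition sum to $q^{|T|K}{K+2m-|T|\brack K}_q$, and the remaining sum over $T$ of fixed size $t$ becomes
\[
\sum_{t}{r\brack t}_q\,q^{t(k-r+t)}{n-r\brack k-r+t}_q
\;=\;\sum_{j}{r\brack j}_q{n-r\brack k-j}_q\,q^{(r-j)(k-j)}
\;=\;{n\brack k}_q
\]
after the substitution $j=r-t$, which is precisely one of the standard forms of the $q$-Vandermonde identity.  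Combining with $\sum_{p\in P_D}p=\mathrm{maj}(D)+m$ and MacMahon's formula \eqref{qcata1} gives the claimed $q^{\binom{k}{2}}{n\brack k}_q C_{(n-k)/2}(q)$.

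In short: your argument appears to resolve the conjecture.  The only suggestions I would make are expository---spell out the nine-case ascent check and the $q$-Vandermonde substitution explicitly, since these are the places a reader is most likely to lose the thread.
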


Setting $k=0$ and replacing $n$ by $2n$, the formula in  \eqref{conj-o} specializes to 
 \eqref{qcata}. 

\begin{re} A formula for   $q$-counting  the statistic $\mathrm{maj}(T)$ of Richardson tableaux of any fixed shape has been explored  in \cite[Theorem 4.8]{KP}. 
We do not know how to deduce   this formula  by using noncrossing involutions. 
\end{re}

By Corollary \ref{coro-2}, the evacuation map  can be regarded  as a permutation of  Richardson tableaux of size $n$. In fact, this is an involution since $\mathrm{evac}(\mathrm{evac}(T))=T$. 
Let $M_n$   denote the $n$-th Motzkin number, counting   Motzkin paths with $n$ steps. Here we set $M_0=1$.

\begin{coro}
The number of Richardson tableaux of size $n$ which are fixed by the evacuation map is equal to 
\begin{equation}\label{chj-i}
M_{\left\lfloor \frac{n}{2} \right\rfloor}+\sum_{k\geq 1} \sum_{1\leq i_1<\cdots<i_k\leq \left\lfloor \frac{n}{2} \right\rfloor}\prod_{p=1}^{k+1} M_{i_p-i_{p-1}-1},
\end{equation}
where we set $i_0=0$ and $i_{k+1}=\left\lfloor \frac{n}{2} \right\rfloor+1$.
\end{coro}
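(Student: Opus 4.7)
The plan is to reduce the enumeration to counting symmetric noncrossing partial matchings on $[n]$ via Theorem \ref{thm-1} together with the method of Corollary \ref{coro-2}, and then to decompose such matchings along their self-symmetric arcs.

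\textbf{Step 1 (Reduction to symmetric matchings).} By Theorem \ref{thm-1}, a Richardson tableau $T$ corresponds to a unique noncrossing involution $w$ with $w \xrightarrow{\mathrm{RS}} (T,T)$. From the argument of Corollary \ref{coro-2}, $w_0 w w_0 \xrightarrow{\mathrm{RS}} (\mathrm{evac}(T), \mathrm{evac}(T))$, and the arc diagram of $w_0 w w_0$ is the reflection of that of $w$ across the vertical line through the midpoint. Hence $\mathrm{evac}(T) = T$ if and only if the diagram of $w$ is invariant under the reflection $i \mapsto n+1-i$. So the quantity sought equals the number of symmetric noncrossing partial matchings on $[n]$.

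\textbf{Step 2 (Structural claim).} Set $m = \lfloor n/2 \rfloor$. I claim that in any symmetric noncrossing partial matching, every arc $(i,j)$ with $i \leq m < j$ is self-symmetric, i.e.\ $j = n+1-i$. The reflected partner of $(i,j)$ is $(n+1-j, n+1-i)$, and a short case analysis on the sign of $i+j-(n+1)$ shows that if this partner differs from $(i,j)$ then the two arcs form a crossing $i' < i < j' < j$ or $i < i' < j < j'$, violating the noncrossing condition. When $n = 2m+1$, an arc containing the middle element $m+1$ would force its mate to equal $n+1-(m+1) = m+1$, which is impossible; so $m+1$ must be a singleton. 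The self-symmetric arcs can therefore be labeled $(i_1,n+1-i_1),\ldots,(i_k,n+1-i_k)$ with $1 \leq i_1 < \cdots < i_k \leq m$, and they are automatically mutually nested.

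\textbf{Step 3 (Decomposition).} After fixing the self-symmetric arcs, the remaining positions of the left half $[m]$ split into the $k+1$ intervals $I_p = [i_{p-1}+1, i_p - 1]$ (with $i_0 = 0$ and $i_{k+1} = m+1$) of sizes $i_p - i_{p-1} - 1$. Any additional arc in the matching either lies entirely in the left half (with a symmetric copy in the right half) or lies entirely in the right half; and the noncrossing condition with each $(i_q, n+1-i_q)$ forces every left-half arc to be contained in exactly one $I_p$. Thus the left-half matching on each $I_p$ is an arbitrary noncrossing partial matching on $|I_p|$ elements, which by Remark \ref{re-0jy} is counted by $M_{|I_p|}$, and the right-half structure is forced by symmetry.

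\textbf{Step 4 (Summation).} Summing the independent contributions from the intervals over all choices of $k \geq 0$ and positions $1 \leq i_1 < \cdots < i_k \leq m$ gives
\[
\sum_{k \geq 0}\ \sum_{1 \leq i_1 < \cdots < i_k \leq m}\, \prod_{p=1}^{k+1} M_{i_p - i_{p-1} - 1},
\]
whose $k = 0$ term equals $M_m$; isolating it recovers the formula \eqref{chj-i}. The main obstacle is the structural claim of Step 2 that every arc crossing the midpoint must be self-symmetric; once this is settled, the remainder is a direct product-rule decomposition.
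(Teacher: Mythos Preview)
Your proof is correct and follows essentially the same approach as the paper's own proof: reduce via Theorem~\ref{thm-1} and the reflection argument of Corollary~\ref{coro-2} to counting symmetric noncrossing partial matchings, observe that any arc crossing the midpoint must be self-symmetric (your Step~2 supplies the crossing verification that the paper leaves implicit), and then decompose the left half into the intervals between the left endpoints of the self-symmetric arcs. The only cosmetic difference is that you organize the casework on $n$ even versus odd inside Step~2, whereas the paper treats the two parities separately.
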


\begin{proof}
In view of  the proof of Corollary \ref{coro-2}, we need to count   noncrossing partial matchings $P$ on $[n]$ which are invariant after a reflection along the vertical line, denoted $L$, passing the midpoint. We first consider the case when $n$ is even. Each arc $(i,j)$ of $P$ either lies on the left-hand/right-hand side of $L$, or intersects with $L$.
In the latter case, we have $i\leq \frac{n}{2}<j$. Since $P$ is noncrossing and symmetric with respect to $L$, we must have $j=n+1-i$, that is, the arc $(i,j)$ must be invariant after a reflection along $L$. Such an arc $(i,j)$, where $1\leq i\leq \frac{n}{2}$ and $j=n+1-i$, is called a big arc.

Note that  the configuration of arcs on the left-hand side of $L$ completely determines $P$. 
Suppose that $P$ has $k$ big arcs. 
If $k=0$, then  the dots $1,2,\ldots, \frac{n}{2}$   form  a noncrossing partial mathching. Such noncrossing partial matchings are counted by $M_{\frac{n}{2}}$. We next consider $k\geq 1$. Let $1\leq i_1<i_2<\cdots<i_k\leq \frac{n}{2}$   be the left endpoints of these big arcs. Notice that for $1\leq p\leq k+1$, the dots $i_{p-1}+1,\ldots, i_p-1$ between $i_{p-1}$ and $i_p$ form a noncrossing partical matching. Here we set $i_0=0$ and $i_{k+1}=\left\lfloor \frac{n}{2} \right\rfloor+1$. So these   noncrossing partial matchings contribute 
\[
\prod_{1\leq p\leq k+1} M_{i_p-i_{p-1}-1}.
\]
Combining the above verifies the formula in \eqref{chj-i}.

In the case when $n$ is odd, we notice that the midpoint $\frac{n+1}{2}$ must be a singleton. The remaining  arguments are completely  analogous
to the even case. 
\end{proof}

This paper  is organized as follows. In Section \ref{sect-2}, we review the RSK algorithm and the evacuation map operating on tableaux. The definition of Richardson tableaux is also included. In Section \ref{sect-3}, we present a proof of Theorem \ref{thm-1}. 

\subsection*{Acknowledgements}
I am grateful to Steven Karp for very helpful  comments on an earlier version. 
This work was  supported by the National Natural Science Foundation of China (No. 12371329) and the Fundamental Research Funds for the Central Universities (No. 63243072).

\section{RS algorithm and evacuation map}\label{sect-2}

We give an overview of the Robinson--Schensted  algorithm, the evacuation map  as well as the definition of Richardson tableaux.

 We mainly follow the terminology in  \cite[Chapter 7]{Stanley}.  Let $\lambda=(\lambda_1\geq   \cdots\geq \lambda_\ell>0)$ be a partition of $n$ with $\ell$ parts. One can  identify $\lambda$ with its Young diagram, namely, a left-justified array of boxes with $\lambda_i$ boxes in row $i$. A semistandard Young tableau $T$ of shape $\lambda$ is a filling of positive integers into the boxes of $\lambda$ such that (1) each box  receives exactly one number, (2) the numbers in each row are weakly increasing from left to right, and (3) the numbers along each column are strictly increasing from top to bottom.  A semistandard tableau $T$ is called standard if the integers appearing in $T$ are exactly $1,2,\ldots, n$. Figures \ref{fig-2} illustrates two  semistandard Young tableaux of shape $(3,2,2)$, where the right one  is a standard tableau. 
\begin{figure}[h t]
\begin{center}
    \begin{ytableau}
1 & 1 & 2 \\
2 & 3 \\
3 & 4
\end{ytableau}
\qquad
\begin{ytableau}
1 & 2 & 4\\
3 & 5 \\
6 & 7
\end{ytableau}
\end{center}
\caption{Two semistandard Young tableaux of size $7$.}\label{fig-2}
\end{figure}

Given a semistandard tableau $T$ and positive integer $i$, the Robinson--Schensted-Knuth (RSK) algorithm produces  a new semistandard tableau, denoted $T\leftarrow i$, by inserting $i$ into $T$. The procedure relies on a row bumping operation. First, look at the first row $R$ of $T$. If $i$ is greater than or equal to every entry in $R$, then place $i$ at the end of $R$ and stop. Otherwise, locate the leftmost entry, say $i'$, in $R$ which is greater than $i$, and replace this entry by $i$. In this case, we continue to insert the bumped entry $i'$ into the second row by applying the same bumping operation. Eventually, the procedure will stop, and the resulting tableau is denoted  $T\leftarrow i$. In Figure \ref{fig-k}, we illustrate the procedure of inserting $i=1$ into the left semitandard tableau in Figure \ref{fig-2}. 
\begin{figure}[h t]
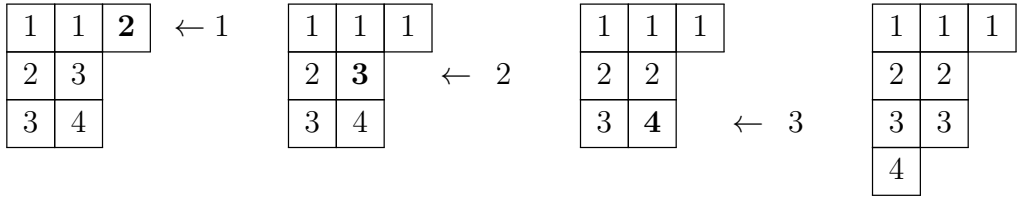

\begin{center}
    \begin{ytableau}
1 & 1 & {\bf 2}  & \none[~~~\leftarrow] &\none[1]\\
2 & 3 \\
3 & 4
\end{ytableau}~~~
    \begin{ytableau}
1 & 1 & 1 \\
2 & {\bf 3} & \none[] & \none[\leftarrow] &\none[2]\\
3 & 4
\end{ytableau}
~~~
    \begin{ytableau}
1 & 1 & 1 \\
2 & 2  \\
3 & {\bf 4} &\none[] & \none[\leftarrow] &\none[3]
\end{ytableau}
 ~~~
    \begin{ytableau}
1 & 1 & 1 \\
2 & 2  \\
3 & 3  \\
4
\end{ytableau}
\end{center}
\caption{An illustration of the RSK algorithm. }\label{fig-k}
\end{figure}

Let $w=w(1)w(2)\cdots w(n)$ be a permutation  of $[n]$. The RSK algorithm can be used to generate a pair $(T, T')$ of standard tableaux of the same shape. It should be pointed out that in the case of permutations, the RSK algorithm is usually called Robinson--Schensted (RS) algorithm which is applied  to words without repeated entries.  Let $T_0$ be the empty tableau, and  $T_i=T_{i-1}\leftarrow w(i)$ for $1\leq i\leq n$. Set $T=T_n$. 
Let $\lambda^i$ be the shape of $T_i$. Notice that   $\lambda^i$ is obtained from   $\lambda^{i-1}$ by adding a box in some corner. Then $T'$ is the standard tableau such that the entry $i$ is assigned in the corner box of $\lambda^i$ not contained in $\lambda^{i-1}$.  
The standard tableaux $T$ and $T'$ are called the insertion and recording tableaux of $w$, respectively. Conversely, given a pair of standard tableaux of the same shape, one can uniquely  recover the   corresponding permutation. 

The above correspondence will be  denoted as $w\xrightarrow{\mathrm{RS}}(T,T')$.
A remarkable theorem due to Sch\"utzenberger states that if   $w^{-1}$ is the inverse of $w$, then  $w^{-1}\xrightarrow{\mathrm{RS}}(T',T)$ \cite[Section 7.13]{Stanley}. So, when $w$ is an involution (that is, $w^2$ is the identity), we have $T=T'$. This gives rise  to a bijection between the set of involutions   of $[n]$ and the set of standard tableaux of size $n$.

We proceed to describe the evacuation map. Let $T$ be standard  tableau of shape $\lambda$. The evacuation map acts on $T$ based on the jeu de taquin slide operations. The resulting tableau, denoted $\mathrm{evac}(T)$, is defined  as follows.  Remove the entry $1$ from  the upper left
 corner of $T$, and decrease the remaining entries by $1$. Next,  slide this empty box until it reaches an outer corner of $T$. The rule is described as follows. Among the boxes directly  below or directly  to the right of this empty box, locate the one filled with a smaller number, and then swap it with the empty box. Repeat the same procedure   until the empty box slides to a position which is a corner on the southeast border  of $\lambda$. Now the entries $1,\ldots, n-1$ form a standard tableau, denoted $\Delta(T)$, of shape $\mu$ which is a partition of size $n-1$. Put the number $n$ into the corner box of $\lambda$ which is not contained in $\mu$.
 Doing the same for $\Delta(T)$ allows us to put $n-1$ into a corner of $\mu$. Continuing this procedure, we get a standard tableau of shape $\lambda$ by filling  the entries $n,n-1,\ldots, 1$ accordingly. The resulting tableau is defined as $\mathrm{evac}(T)$. 

Recall that $w_0=n\cdots 2 1$ denotes the longest permutation. The conjugate permutation $w_0ww_0$ maps $i$ to $n+1-w(n+1-i)$ for $1\leq i\leq n$. Sch\"utzenberger \cite{Schu-1} proved the following connection, see also \cite[Theorem A1.2.10]{Stanley}.

\begin{theo}[{\cite{Schu-1}}]
If $w\xrightarrow{\mathrm{RS}}(T,T')$, then $w_0ww_0\xrightarrow{\mathrm{RS}}(\mathrm{evac}(T),\mathrm{evac}(T'))$.     
\end{theo}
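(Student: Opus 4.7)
My plan is to establish this classical symmetry via Fomin's growth-diagram formulation of the Robinson--Schensted correspondence, in which both the inversion symmetry $w\mapsto w^{-1}$ and the conjugation symmetry $w\mapsto w_0ww_0$ become transparent geometric operations on the underlying grid.

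\textbf{Step 1 (growth-diagram setup).} Given $w$, I place a dot at position $(i,w(i))$ in an $n\times n$ grid and attach to each lattice vertex $(i,j)$, $0\le i,j\le n$, a partition $\lambda^{(i,j)}$ via Fomin's local growth rule, with $\emptyset$ seeded along the bottom row and the left column. In this framework the chain of partitions read along the top edge encodes the recording tableau $T'$, while the chain along the right edge encodes the insertion tableau $T$; this recovers the usual $w\xrightarrow{\mathrm{RS}}(T,T')$.

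\textbf{Step 2 (rotation encodes $w_0ww_0$).} Since the conjugate permutation sends $(i,w(i))$ to $(n+1-i,\,n+1-w(i))$, the permutation matrix of $w_0ww_0$ is the $180^\circ$ rotation of that of $w$. Fomin's local growth rule is self-dual under $180^\circ$ rotation (the forward rule at one corner of a unit square becomes the backward rule at the opposite corner), so the growth diagram of $w_0ww_0$ is literally the rotated growth diagram of $w$. Under this rotation, the top edge of the $w$-diagram becomes the bottom edge of the $w_0ww_0$-diagram traversed in reverse, and similarly right $\leftrightarrow$ left.

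\textbf{Step 3 (reverse reading equals evacuation).} The essential content is that reading a growth-chain backwards (through the backward local rules) produces the evacuation of the tableau obtained by the forward reading. I would argue by induction on $n$: the corner box of $T$ containing $n$ corresponds to peeling off the last column of the growth diagram via a single backward local rule, and this peel-off implements one jeu de taquin slide on the ambient tableau. Iterating, the $n$ resulting slides match precisely the algorithmic construction of $\mathrm{evac}(T)$ (delete $1$, slide the empty box to a southeast corner, place $n$ there, recurse on the smaller tableau of size $n-1$). Applying this to both the top edge (yielding $\mathrm{evac}(T')$) and the right edge (yielding $\mathrm{evac}(T)$) of the rotated diagram gives the desired identity.

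The main obstacle lies in Step~3: aligning Fomin's four-partition local rule with a single jeu de taquin slide is a careful local check on neighboring squares, and one must verify that iterating the backward rule indeed replicates the recursive definition of $\mathrm{evac}$ rather than some related involution such as promotion. Once this local-to-global correspondence is pinned down, Steps~1 and~2 are essentially formal, and the theorem follows immediately by combining the rotated readings on both boundary edges.
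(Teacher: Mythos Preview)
The paper does not prove this statement at all: it is quoted as a classical result of Sch\"utzenberger, with citations to \cite{Schu-1} and \cite[Theorem A1.2.10]{Stanley}, and is used as a black box in the proof of Corollary~\ref{coro-2}. So there is no ``paper's own proof'' to compare your attempt against.

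That said, your growth-diagram strategy is a correct and standard modern route to this theorem. Steps~1 and~2 are exactly right and, as you note, essentially formal once Fomin's framework is in place. Step~3 is also correct in outline: the point that the reverse local rule on a unit square implements a single jeu de taquin slide, and hence that reading a maximal chain backwards through the diagram realizes Sch\"utzenberger's evacuation, is precisely the content needed; it is worked out carefully in, e.g., van Leeuwen's survey and in Stanley's Appendix~A1. Your caveat about distinguishing evacuation from promotion is well placed but not an actual obstacle here, since the full $180^\circ$ rotation (as opposed to a cyclic shift of a single row or column) is what forces evacuation rather than promotion. In short, your sketch is sound; it simply goes well beyond what the paper itself supplies.
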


From the  above theorem, it can be seen that the evacuation map is actually an involution on standard tableaux, meaning that $\mathrm{evac}(\mathrm{evac}(T))=T$. 

We end this section with the definition of Richardson tableaux. For a standard tableau $T$ of size $n$ and $1\leq j\leq n$, let $T_{\leq j}$ be the subtableau occupied by the entries $1,\ldots, j$. Let $r_j(T)$ be the row number in which $j$ appears in $T$. We say that $T$ is a Richardson tableau if it satisfies the following   condition 
\begin{itemize}
    \item[] {\bf Richardson condition}: For each $1\leq j\leq n$ with $r_j(T)\geq 2$, the largest entry of $T_{\leq j}$ in row $r_j(T)-1$ is greater than every entry of $T_{\leq j}$   lying  in or strictly below row  $r_j(T)$. 
\end{itemize}
There are several equivalent characterizations for Richardson tableaux   \cite[Theorem 1.5]{KP}. We collect some of them in the following theorem. 

\begin{theo}[{\cite[Theorem 1.5]{KP}}]
Let $T$ be standard tableau of shape $\lambda$. The following statements are equivalent.
\begin{itemize}
    \item[(1)] $T$ is a Richardson tableau.

    \item[(2)] The evacuation tableau $\mathrm{evac}(T)$ is a Richardson tableau.

    \item[(3)] For every entry $j$ in the second row of $T$, the
 entry $j-1$ appears in the first row, and the standard tableau obtained from  $T$ by deleting the first row (and standardizing  the
remaining  entries) is Richardson.

\item[(4)] Every evacuation slide route of $T$ is of shape `$L$'.

\item[(5)]  The irreducible compoment $\mathcal{B}_T$   of the Springer fiber $\mathcal{B}_\lambda$ is a Richardson variety.
\end{itemize}
    
\end{theo}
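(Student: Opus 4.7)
The plan is to organize the proof around (1) as the central property and connect each of (2), (3), (4), (5) to it in turn. The implications (1) $\Leftrightarrow$ (2) and (1) $\Leftrightarrow$ (5) anchor the statement at its two endpoints (combinatorial evacuation on one side, Springer geometry on the other), while (3) and (4) are reformulations phrased purely in the language of tableaux.

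For (1) $\Leftrightarrow$ (2) I would simply reuse Corollary \ref{coro-2}, which was already established via Theorem \ref{thm-1}: if $w$ is the noncrossing involution with $w\xrightarrow{\mathrm{RS}}(T,T)$, then $w_0 w w_0$ is again a noncrossing involution, because its matching diagram is obtained from that of $w$ by a reflection across the central vertical line, and applying Theorem \ref{thm-1} to $w_0 w w_0$ identifies its insertion tableau $\mathrm{evac}(T)$ as Richardson iff $T$ is.

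For (1) $\Leftrightarrow$ (3) I would induct on $n$. In the forward direction, take an entry $j$ in row $2$ of $T$ and apply the Richardson condition at $j$: the largest entry of $T_{\leq j}$ sitting in row $1$ must strictly dominate every other entry of $T_{\leq j}$ lying in row $2$ or below. A short argument forces $j-1$ itself to lie in row $1$, since otherwise $j-1$ would obstruct the required domination (it is the largest candidate among the smaller entries placed below row $1$). Stripping the first row and relabeling then translates the Richardson condition at each remaining entry of $T$ into the Richardson condition at the corresponding entry of the smaller tableau, and the induction closes in both directions.

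For (1) $\Leftrightarrow$ (4) I would follow the evacuation slide route initiated from the cell $(1,1)$. At each step the Richardson condition regulates the competition between the box directly below and the box directly to the right of the vacant cell, and translating this control into the global geometry of the route shows that it must first proceed rightward along row $1$ and then descend a single column, which is precisely the ``$L$'' shape; the cleanest way to do this is to invoke (3), strip row $1$, and induct on the number of rows. The main obstacle is (1) $\Leftrightarrow$ (5), which is the true geometric heart of the theorem from \cite{KP}: my plan would be to combine Spaltenstein's parametrization $T \mapsto \mathcal{B}_T$ with an explicit description of $\mathcal{B}_T$ as a union of Schubert cells, and then to translate the condition $\mathcal{B}_T = R_{v_T, w_T}$ into a combinatorial condition on $T$; identifying this condition with the Richardson condition requires the full geometric machinery of \cite{KP}, and I would cite it rather than reprove it here.
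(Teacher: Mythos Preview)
The paper does not prove this theorem. It is quoted at the end of Section~\ref{sect-2} as \cite[Theorem~1.5]{KP}, introduced with ``We collect some of them in the following theorem,'' and no argument is supplied; all five equivalences are imported wholesale from Karp--Precup as background. There is therefore no in-paper proof to compare your proposal against.

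Two remarks on the proposal itself. First, your route to (1)\,$\Leftrightarrow$\,(2) via Corollary~\ref{coro-2} is not self-contained in the way you suggest: Corollary~\ref{coro-2} rests on Theorem~\ref{thm-1}, and the proof of Theorem~\ref{thm-1} (Theorem~\ref{hg-x}) repeatedly invokes structural facts from \cite{KP} (Lemma~3.9, Proposition~3.13, Corollaries~3.15 and~3.16). If any of those depend on \cite[Theorem~1.5]{KP}, your argument is circular; you would need to check this before presenting it as an independent proof of the equivalence. Second, your description of the `$L$' shape in (4) has the orientation reversed: for a Richardson tableau the evacuation slide from $(1,1)$ goes \emph{down} the first column and only then turns right, not ``rightward along row~$1$ and then descend.'' The non-Richardson tableau $\begin{smallmatrix}1&2\\3&4\end{smallmatrix}$ has the right-then-down route, which is precisely what (4) is meant to exclude.
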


\section{Noncrossing involutions and Richardson tableaux}\label{sect-3}

In this section, we establish the relationship between the insertion tableaux of noncrossing involutions and Richardson tableaux, thus finishing  the proof of Theorem \ref{thm-1}. 

Let $u$ and $v$ be permutations of $[m]$ and $[n]$, respectively. The direct sum   $u\oplus v$ of $u$ and $v$ is  the permutation of $[m+n]$ defined by 
\[
u(1)\cdots u(m)\, v(1)+m\cdots v(n)+m,
\]
that is, concatenate  $u$ and $v$, and then increase each entry $v(i)$ by $m$. For example, for $u=2413$ and $v=21$, we have $u\oplus v=2413 65$. A permutation $w$ is called prime if it cannot be written as direct sum of smaller permutations. Clearly, each permutation can be uniquely written as a direct sum of  prime permutations. 

\begin{re}\label{re-3-1}
Let $w$ be an noncrossing  involution of $[n]$. Then $w$ is prime if and only if $w(1)=n$. Equivalently, in its diagram representation, there is an arc $(1,n)$ connecting $1$ and $n$, see Figure \ref{fig-x} for an illustration. Clearly, each noncrossing involution is a direct sum of prime noncrossing involutions. 
\end{re}  
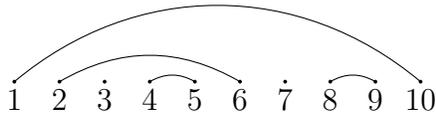
\begin{figure}[h t]
  \begin{center}
    \begin{tikzpicture}[>=stealth,scale=0.6]
    \foreach \i in {1,...,10} {
        \node at (\i,0) {\i};
        \fill (\i,0.4) circle (1.2pt); 
    }

    \draw[-,bend left=40] (1,0.4) to (10,0.4);
    \draw[-,bend left=30] (2,0.4) to (6,0.4);
    \draw[-,bend left=30] (4,0.4) to (5,0.4);
    \draw[-,bend left=30] (8,0.4) to (9,0.4);

\end{tikzpicture}
\end{center}
\caption{A prime noncrossing partial matching.}\label{fig-x}
\end{figure}

We now define the concatenation of  standard tableaux \cite[Section 3.3]{KP}. Let $T$ be a standard tableau of size $m$, and $T'$ be a standard tableau of size $n$. The concatenation $T\circ T'$ of $T$ and $T'$ is a standard tableau of size $m+n$ defined by increasing  each entry of $T'$ by $m$, and then concatenating the corresponding  rows of $T$ and $T'$. This is best understood by an example as illustrated in Figure \ref{fig-o}. 
\begin{figure}[h t]
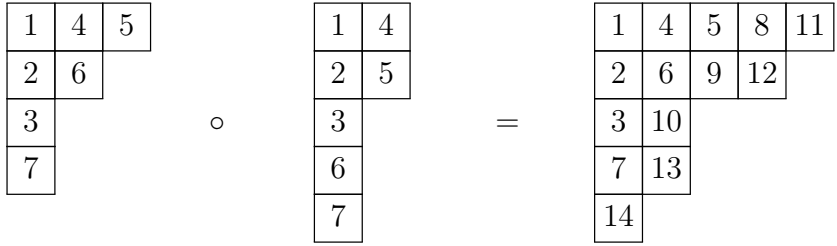

\begin{center}
    \begin{ytableau}
1 & 4 & 5 \\
2 & 6 \\
3 \\
7
\end{ytableau}
\quad\begin{ytableau}
    \none[]\\
    \none[]\\
    \none[\circ]
    
\end{ytableau} \qquad
\begin{ytableau}
1 & 4\\
2 & 5 \\
3\\
6\\
7
\end{ytableau}
\qquad\begin{ytableau}
    \none[]\\
    \none[]\\
    \none[=]
    \end{ytableau}\qquad
\begin{ytableau}
1 & 4 & 5 & 8 & 11\\
2 & 6 & 9 & 12 \\
3 & 10\\
7 & 13\\
14
\end{ytableau}
\end{center}
\caption{The concatenation of two standard tableaux.}\label{fig-o}
\end{figure}
Similarly, a standard tableau is prime if it cannot be decomposed into a concatenation of smaller  standard tableaux. Note that each standard tableau has a unique decomposition into prime pieces. 
For example, the left two standard tableaux in Figure \ref{fig-o} are both prime Richardson tableaux. 

\begin{re}
There is a geometric meaning of the concatenation \cite{Fre}:
\[\mathcal{B}_{T\circ T'}\simeq \mathcal{B}_{T}\times \mathcal{B}_{T'},\]
which could reduce the study of $\mathcal{B}_{T}$ to the   prime counterparts.  
\end{re}

By the construction of the RSK algorithm, the following fact 
is immediate.

\begin{lem}\label{lem-23}
Let $T$ and $T'$ be the insertion tableaux of $u$ and $v$, respectively. Then the insertion tableau of $u\oplus v$ is equal to $T\circ T'$. 
\end{lem}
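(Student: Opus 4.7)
The plan is to induct on $n = |v|$. The base case $n=0$ is immediate, since $u \oplus v = u$ and $T \circ \varnothing = T$. For the inductive step, let $T'_k$ denote the insertion tableau of the prefix $v(1)\cdots v(k)$ of $v$, and let $T_k$ denote the insertion tableau of the word obtained by inserting $u(1),\ldots,u(m),v(1)+m,\ldots,v(k)+m$ in order. I would prove by a secondary induction on $k$ that $T_k = T \circ T'_k$, where throughout we silently shift the entries of $T'_k$ by $+m$ so that the concatenation is a standard tableau.

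The entire argument reduces to a single row-insertion claim: if $S$ is any standard tableau whose entries lie in $[m]$ and $S'$ is any standard tableau whose entries exceed $m$ such that $S \circ S'$ is standard, then for any $a>m$,
\[
(S \circ S') \leftarrow a \;=\; S \circ (S' \leftarrow a).
\]
I would verify this one row at a time. In the top row of $S \circ S'$, the $S$-entries are $\leq m < a$ while the $S'$-entries exceed $m$; consequently the leftmost entry strictly greater than $a$, if any, is forced to sit in the $S'$-portion, so the bump takes place entirely inside $S'$ and evicts some $a' > a > m$. The same reasoning then applies verbatim in the next row, and so on down, since the bumped element is always $>m$ and every row of $S \circ S'$ has its $S$-part consisting of entries $\leq m$ followed by its $S'$-part consisting of entries $>m$.

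Applying this observation to the $k$-th step of the main induction, i.e.\ inserting $v(k)+m$ into $T_{k-1} = T \circ T'_{k-1}$, I obtain $T_k = T \circ (T'_{k-1} \leftarrow (v(k)+m)) = T \circ T'_k$. Taking $k=n$ yields the lemma. I expect the only finicky point to be keeping the bookkeeping clean, namely verifying the invariant that at every intermediate stage the entries inherited from $T$ really do form a prefix of each row of $T_k$; but this is just the observation that $T$-entries are $\leq m$ while inserted entries are $>m$, so no $T$-entry is ever bumped and no new entry ever lands to the left of a $T$-entry.
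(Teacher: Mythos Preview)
Your argument is correct. The paper does not actually prove this lemma; it simply asserts that it is ``immediate'' from the construction of the RSK algorithm, so your proposal is precisely the routine verification the paper omits, and your key row-by-row observation that an inserted element $>m$ can never bump an $S$-entry is exactly the right point.
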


Let $\mathrm{NC}(n)$ be the set of  noncrossing involutions of $[n]$, and $\mathrm{RT}(n)$ be the set of Richardson tableaux of size $n$. 
The following is restatement of Theorem \ref{thm-1}.

\begin{theo}\label{hg-x}
    For $n\geq 1$, there exists a bijection    $\Phi\colon \mathrm{NC}(n) \rightarrow\mathrm{RT}(n)$ by sending $w\in \mathrm{NC}(n)$ to its insertion tableau. 
\end{theo}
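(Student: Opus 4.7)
Since the Robinson--Schensted correspondence is injective on involutions by Sch\"utzenberger's theorem, and $|\mathrm{NC}(n)|=M_n=|\mathrm{RT}(n)|$ by Remark \ref{re-0jy} together with \cite{KP}, the plan is to show that $\Phi(w)\in\mathrm{RT}(n)$ for every $w\in\mathrm{NC}(n)$, after which the bijection claim follows automatically. I proceed by induction on $n$, exploiting the prime decomposition of Remark \ref{re-3-1} together with the compatibility of Lemma \ref{lem-23}.

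If $w=u\oplus v$ is not prime, then $\Phi(w)=\Phi(u)\circ\Phi(v)$, the factors are Richardson by the induction hypothesis, and a short check using characterization (3)---together with the observation that row-$1$ deletion commutes with concatenation---shows that the concatenation of two Richardson tableaux is Richardson, so that $\Phi(w)\in\mathrm{RT}(n)$. So assume $w$ is prime, meaning $w(1)=n$ and the arcs other than $(1,n)$ form, after relabeling $\{2,\ldots,n-1\}$ as $[n-2]$, a noncrossing involution $w''\in\mathrm{NC}(n-2)$; by the induction hypothesis, $T''=\Phi(w'')$ is Richardson.

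The next step is to describe $T=\Phi(w)$ explicitly in terms of $T''$. I would analyze the RS insertion of the word $n,w(2),\ldots,w(n-1),1$ in three stages---insert $n$, then the middle letters (all of value less than $n$), then $1$. The middle stage produces $T''+1$ (the tableau obtained from $T''$ by shifting every entry by $1$) with $n$ appended as a new single-box row below column $1$; this follows from a direct induction showing that each bumping chain reaching the foot of column $1$ pushes $n$ one further row down. The final insertion of $1$ then cascades down column $1$, shifting each column-$1$ entry one row down and placing $n$ in a further new row. The outcome is a deterministic operation $\Psi$ on standard tableaux such that $\Phi(w)=\Psi(T'')$, with shape $(\mu''_1,\ldots,\mu''_{\ell''},1,1)$; in particular $\Psi(T'')$ is prime.

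It remains to show that $\Psi$ sends Richardson tableaux to Richardson tableaux, which I establish by a second induction on the size of the input using characterization (3). The row-$2$ predecessor condition is direct: the row-$2$ entries of $\Psi(T)$ are $2=T[1,1]+1$ (with predecessor $1$ in row $1$) and the values $T[2,c]+1$ for $c\geq 2$, and Richardsonness of $T$ gives $T[2,c]-1=T[1,c']$ for some $c'\geq 2$ (the case $c'=1$ is excluded because it would force $T[2,c]=2$ and hence $c=1$, contradicting $c\geq 2$), so $T[2,c]=T[1,c']+1$ lies in row $1$ of $\Psi(T)$. For the deletion condition, the essential ingredient is the commutation identity $\Psi(T)^{(1)}=\Psi(T^{(1)})$, where $(-)^{(1)}$ denotes row-$1$ deletion followed by standardization; granting it, $\Psi(T)^{(1)}$ is Richardson by the inner induction applied to the strictly smaller Richardson tableau $T^{(1)}$. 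The main obstacle I anticipate is the commutation identity itself, which I plan to verify by a position-by-position comparison: corresponding entries of the two tableaux are ranks of entries of $T$ in explicit subsets of $\{1,\ldots,|T|+2\}$ and $\{1,\ldots,|T^{(1)}|+2\}$ respectively, and the difference between these ranks is accounted for uniformly by a count of how many row-$1$ entries of $T$ lie below each threshold.
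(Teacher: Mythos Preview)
Your argument is correct. The description of $\Psi$ (shift by $1$, append $n$ below column $1$, then cascade the insertion of $1$ down column $1$) matches the paper's, and your commutation identity $\Psi(T)^{(1)}=\Psi(T^{(1)})$ does hold: for any entry $v$ of $T$ in row $\geq 2$ one computes $\tau(v+1)=v-|\{c\geq 2:T[1,c]<v\}|=\sigma(v)+1$, since $T[1,1]=1<v$ always contributes the extra unit, and $\tau(n)=n-\mu_1$ as required. Together with the row-$2$ predecessor check this closes the inner induction via characterization~(3).

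The route, however, differs from the paper's in two respects. First, you invoke the enumeration $|\mathrm{RT}(n)|=M_n$ from \cite{KP} to upgrade the inclusion $\Phi(\mathrm{NC}(n))\subseteq\mathrm{RT}(n)$ to a bijection; the paper instead proves surjectivity directly by running reverse RS from the bottom box of a prime Richardson tableau, and the substantive step there is the Claim that $U(i,1)<U(i-1,2)$, which forces the reverse bumping path to stay in column $1$. The paper's approach is self-contained and yields an explicit description of $\Phi^{-1}$ (used in Corollary~\ref{gh-12} and Remark~\ref{re-xj}), at the cost of that extra Claim; your approach is shorter but imports the Motzkin count. Second, for the forward direction in the prime case the paper simply asserts that shifting column $1$ down preserves the original Richardson condition (``it is easily checked''), whereas you give a structured proof via characterization~(3) and the commutation lemma; your version is more transparent here, and in particular makes explicit why the row-$2$ predecessor condition survives (the exclusion of $c'=1$).
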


Before providing a proof of Theorem \ref{hg-x}, we list  the insertion tableaux of all partial matchings on $[4]$, as illustrated in Figure \ref{fig-xc}. 
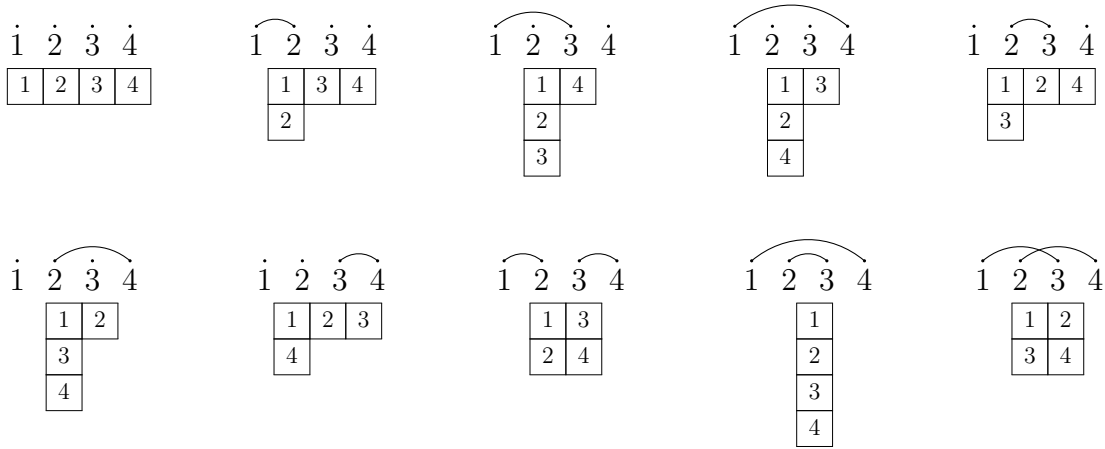
\begin{figure}[h t]
 \begin{tikzpicture}[>=stealth,scale=0.5]
    \foreach \i in {1,...,4} {
        \node at (\i,0) {\i};
        \fill (\i,0.5) circle (1.2pt); 
    }


    \end{tikzpicture}~~~~~~~~~~~ 
    \begin{tikzpicture}[>=stealth,scale=0.5]
    \foreach \i in {1,...,4} {
        \node at (\i,0) {\i};
        \fill (\i,0.5) circle (1.2pt); 
    }

    \draw[-,bend left=40] (1,0.5) to (2,0.5);

    \end{tikzpicture}~~~~~~~~~~~ 
    \begin{tikzpicture}[>=stealth,scale=0.5]
    \foreach \i in {1,...,4} {
        \node at (\i,0) {\i};
        \fill (\i,0.5) circle (1.2pt); 
    }

    \draw[-,bend left=40] (1,0.5) to (3,0.5);

    \end{tikzpicture}~~~~~~~~~~~ 
        \begin{tikzpicture}[>=stealth,scale=0.5]
    \foreach \i in {1,...,4} {
        \node at (\i,0) {\i};
        \fill (\i,0.5) circle (1.2pt); 
    }

    \draw[-,bend left=40] (1,0.5) to (4,0.5);

    \end{tikzpicture}~~~~~~~~~~~ 
    \begin{tikzpicture}[>=stealth,scale=0.5]
    \foreach \i in {1,...,4} {
        \node at (\i,0) {\i};
        \fill (\i,0.5) circle (1.2pt); 
    }

    \draw[-,bend left=40] (2,0.5) to (3,0.5);

    \end{tikzpicture}~~~~~~~~~~~ 
    
\scalebox{0.75}{
\begin{ytableau}
1 & 2 & 3 &4 
\end{ytableau}~~~~~~~~~~~~~~
\begin{ytableau}
1 & 3 & 4 \\
2
\end{ytableau}~~~~~~~~~~~~~~~~~~
\begin{ytableau}
1 & 4\\
2\\
3
\end{ytableau}~~~~~~~~~~~~~~~~~~~~~
\begin{ytableau}
1 & 3 \\
2\\
4
\end{ytableau}~~~~~~~~~~~~~~~~~~
\begin{ytableau}
1 & 2 & 4 \\
3  \\
\end{ytableau}
}

\vspace{20pt}
    \begin{tikzpicture}[>=stealth,scale=0.5]
    \foreach \i in {1,...,4} {
        \node at (\i,0) {\i};
        \fill (\i,0.5) circle (1.2pt); 
    }

    \draw[-,bend left=40] (2,0.5) to (4,0.5);

    \end{tikzpicture}~~~~~~~~~~~ 
        \begin{tikzpicture}[>=stealth,scale=0.5]
    \foreach \i in {1,...,4} {
        \node at (\i,0) {\i};
        \fill (\i,0.5) circle (1.2pt); 
    }

    \draw[-,bend left=40] (3,0.5) to (4,0.5);

    \end{tikzpicture}~~~~~~~~~~ 
        \begin{tikzpicture}[>=stealth,scale=0.5]
    \foreach \i in {1,...,4} {
        \node at (\i,0) {\i};
        \fill (\i,0.5) circle (1.2pt); 
    }

    \draw[-,bend left=40] (1,0.5) to (2,0.5);
    \draw[-,bend left=40] (3,0.5) to (4,0.5);

    \end{tikzpicture}~~~~~~~~~~~ 
            \begin{tikzpicture}[>=stealth,scale=0.5]
    \foreach \i in {1,...,4} {
        \node at (\i,0) {\i};
        \fill (\i,0.5) circle (1.2pt); 
    }

    \draw[-,bend left=40] (1,0.5) to (4,0.5);
    \draw[-,bend left=40] (2,0.5) to (3,0.5);

    \end{tikzpicture}~~~~~~~~~
                \begin{tikzpicture}[>=stealth,scale=0.5]
    \foreach \i in {1,...,4} {
        \node at (\i,0) {\i};
        \fill (\i,0.5) circle (1.2pt); 
    }

    \draw[-,bend left=40] (1,0.5) to (3,0.5);
    \draw[-,bend left=40] (2,0.5) to (4,0.5);

    \end{tikzpicture}~~~~~~~~~
    
    \scalebox{0.75}{
    ~~~~~\begin{ytableau}
    1 & 2 \\ 3 \\ 4 
    \end{ytableau}~~~~~~~~~~~~~~~~~~~
    \begin{ytableau}
    1 & 2 & 3 \\
    4
    \end{ytableau}~~~~~~~~~~~~~~~~~~
    \begin{ytableau}
    1 & 3\\
    2 & 4\\
    \end{ytableau}~~~~~~~~~~~~~~~~~~~~~~~~
    \begin{ytableau}
    1 \\
    2\\
    3\\
    4
    \end{ytableau}~~~~~~~~~~~~~~~~~~~~~~
        \begin{ytableau}
    1 & 2\\
    3 & 4
    \end{ytableau}
    }

\caption{Insertion tableaux of partial matchings on $[4]$. }\label{fig-xc}
\end{figure}
There are a total of ten partial matchings. The rightmost one in the second row is not noncrossing, whose insertion tableau is the only standard tableau of size $4$ which is not a Richarson tableau. Moreover, among the nine Richardson tableaux, two are prime tableaux which exactly correspond  to the two prime noncrossing partial matchings drawn   in the fourth column. 

\begin{proof}[Proof of Theorem \ref{hg-x}]
 The proof is   by induction on size $n$. For $n=1$ or $2$, this is clear as shown in Figure \ref{cxh-09}. 
\begin{figure}[h t]
    \begin{center}
    ~    \begin{tikzpicture}[>=stealth,scale=0.5]
    \foreach \i in {1} {
        \node at (\i,0) {\i};
        \fill (\i,0.5) circle (1.2pt); 
    }


    \end{tikzpicture}~~~~~~~~~~
       \begin{tikzpicture}[>=stealth,scale=0.5]
    \foreach \i in {1,...,2} {
        \node at (\i,0) {\i};
        \fill (\i,0.5) circle (1.2pt); 
    }


    \end{tikzpicture}~~~~~~~~~~~~~
       \begin{tikzpicture}[>=stealth,scale=0.5]
    \foreach \i in {1,...,2} {
        \node at (\i,0) {\i};
        \fill (\i,0.5) circle (1.2pt); 
    }

    \draw[-,bend left=40] (1,0.5) to (2,0.5);

    \end{tikzpicture}
    \end{center}

   ~~~~~~~~~~~~~~~~~~~~~~~~~~~~~~~~~~~ \begin{ytableau}
        1
    \end{ytableau}~~~~~~~~~
    \begin{ytableau}
        1 & 2
    \end{ytableau}~~~~~~~~~~~~~
    \begin{ytableau}
        1 \\ 2
    \end{ytableau}
  \caption{The correspondence $\Phi$ for $n=1,2$.}  \label{cxh-09}
\end{figure}
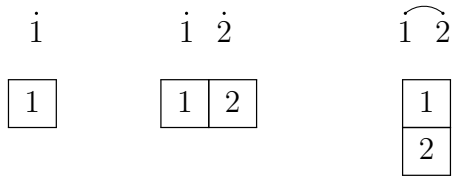
Assume now that $n>2$.
We first show that $\Phi$ is an injection from $\mathrm{NC}(n)$ to $\mathrm{RT}(n)$. 
Let $w$ be a noncrossing involution of $[n]$, and $T$ be its insertion tableau. We need to verify that  $T$ is a Richardson tableau.  There are two cases.

\begin{itemize}
    \item[(1)] The noncrossing involution  $w$ is not   prime.
    In this case, we decompose  $w=w^1\oplus w^2\oplus \cdots \oplus w^k$   into prime noncrossing involutions.     
    It follows from Lemma \ref{lem-23} that $T=T_1\circ T_2\circ \cdots \circ T_k$, where $T_i$ is the insertion tableau of the prime piece $w^i$. By induction, each $T_i$ is a Richardson tableau. By \cite[Lemma 3.9]{KP}, the concatenation of Richardson tableaux is also a Richardson tableau. This  concludes  that $T$ is a Richardson tableau.

    \item[(2)] The noncrossing involution  $w$ is    prime. In this case, we have $w(1)=n$ and $w(n)=1$.  Let $U$ be the insertion tableau  of  $w(1)w(2)\cdots w(n-1)$, and $U'$ be the insertion tableau of $w(2)w(3)\cdots w(n-1)$. Since $w(1)=n$, it is easily seen that $U$ is simply obtained from $U'$ by placing a box filled with $n$ as a new row on the bottom.  Now $T$ is obtained  by inserting $w(n)=1$ into $U$. By the construction of the RS algorithm,  $T$ is obtained from $U$ by moving each entry in the first column down by one unit, and then placing $1$ into the top-left corner box. 

    Note that the tableau $U'$ is the insertion tableau of a noncrossing involution of $\{2,3,\ldots, n-1\}$. By induction, $U'$ satisfies the Richardson condition. Since $U$ is obtained by placing $n$ on the bottom of $U'$, we see that $U$ also satisfies the Richardson condition. Furthermore, since $T$ is obtained from $U$ by shifting every entry in the first column down, it is easily checked  that $T$ also satisfies the Richardson condition. So $T$ is a Richardson tableau. 

Let us give an example to explain the above construction for the  prime case.  Figure \ref{fig-pq}, lists a prime noncrossing partial matching on $[8]$, whose associated involution is $84325761$, together with its  corresponding tableaux $T$, $U$ and $U'$. 
\begin{figure}[h t]
  \begin{center}
    \begin{tikzpicture}[>=stealth,scale=1]
    \foreach \i in {1,...,8} {
        \node at (\i,0) {\i};
        \fill (\i,0.4) circle (1.2pt); 
    }

    \draw[-,bend left=40] (1,0.4) to (8,0.4);
    \draw[-,bend left=30] (2,0.4) to (5,0.4);
    \draw[-,bend left=30] (6,0.4) to (7,0.4);

\end{tikzpicture}
\end{center}

\begin{center}
    \begin{ytableau}
1 & 4 & 6 \\
2 & 7 \\
3 \\
5\\
8\\
\none[]\\
\none[]& \none[T=U\leftarrow 1]
\end{ytableau}
\qquad  \qquad
\begin{ytableau}
2 & 4 & 6 \\
3 & 7 \\
5 \\
8\\
\none[]\\
\none[]\\
\none[]& \none[U]
\end{ytableau}
\qquad \qquad
\begin{ytableau}
2 & 4 & 6 \\
3 & 7 \\
5\\
\none[]\\
\none[]\\
\none[]\\
\none[]& \none[U']
\end{ytableau}
\end{center}
\caption{An illustration of the construction for the prime case.}\label{fig-pq}
\end{figure}
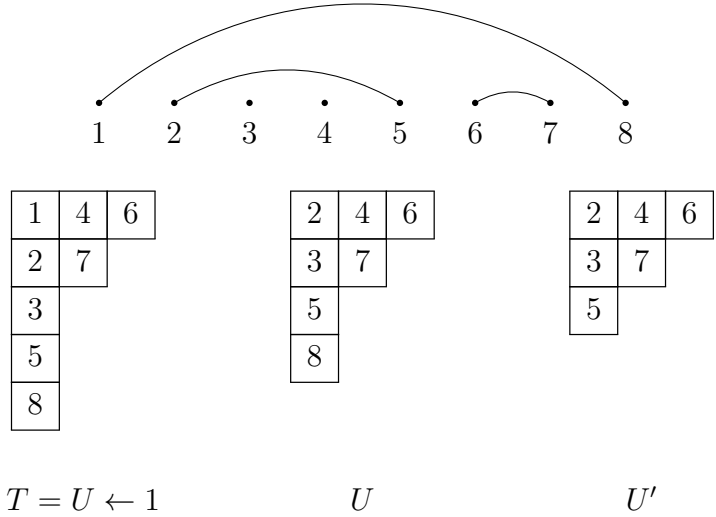

\end{itemize}

To complete the proof, we need to verify that for each Richardson tableau $T\in \mathrm{RT}(n)$, there exists a noncrossing involution $w$ of $[n]$ whose insertion tableau equals $T$. The proof is also by induction on $n$. Similarly, the arguments are  divided into two cases.
\begin{itemize}
    \item[(1)] The Richardson tableau $T$ is not  prime.       Write  $T=T_1\circ T_2\circ \cdots \circ T_k$ for the decomposition into prime Richardson tableaux. By induction, there exists a unique noncrossing involution  $w^i$ whose insertion tableau is $T_i$. Set  $w=w^1\oplus w^2\oplus \cdots \oplus w^k$. By  Lemma \ref{lem-23}, we see that  $\Phi(w)=T$.

    \item[(2)] The Richardson tableau $T$ is  prime. By \cite[Corollary 3.15]{KP}, the shape of $T$ is of   form    $\lambda=(\lambda_1,\ldots, \lambda_{\ell-2},1,1)$, and moreover it follows from  \cite[Corollary 3.16]{KP} that the largest number $n$ appears in the bottom row. 
Let $T(i,j)$ denote the entry of $T$ lying in row $i$ and column $j$.  Let $U$ be the standard  tableau on $\{1,2\ldots, n-1\}$ obtained from $T$ by removing the lowest  box containing $n$. Then $U$ has $\ell-1$ rows, and the last row of $U$ has one box.  The following claim is a key ingredient. 

{\textbf{Claim.} } 
Fix any row index  $2\leq i\leq \ell-1$. Suppose that the box $(i-1,2)$ also lies in $U$. Then we have $U(i,1)<U(i-1, 2)$. 

The above claim can be proved by induction on $n$.    
 By the  definition of a Richardson tableau, it is clear  that  $U$ is also a Richardson tableau.  Making use of 
\cite[Proposition 3.13]{KP}, there exists a unique  decomposition $U=P\circ Q$ where $P$ is a prime Richardson tableau with exactly  $\ell-1$ rows, and $Q$ is a Richardson tableau with at most $\ell-2$ rows. So the box $(i,1)$ is in $P$. If  $(i-1,2)$ lies in $Q$, the claim is clearly true since every entry in $Q$ is greater than all the entries in $P$. It remains to consider the case when $(i-1,2)$ lies in $P$. In this case, since $P$ is prime,  the shape of $P$ has the form $\mu=(\mu_1,\ldots, \mu_{\ell-3}, 1,1)$. This, along with the assumption that $(i-1,2)$ lies in $P$, implies that  $i<\ell-1$. Therefore, we may apply induction to $P$ and conclude that $P(i,1)<P(i-1,2)$.  This verifies the claim since $U(i,1)=P(i,1)$ and $U(i-1,2)=P(i-1,2)$. 

Now, we locate the lowest box $(\ell-1,1)$ of $U$, and complement the reverse procedure of the RSK algorithm. According to the above claim, we see that the `bumping route' is exactly in the first column. This implies $U=U'\leftarrow 1$ where $U'$ is the tableau obtained from $U$ by deleting the entry $1$ in   the first column, and then moving the remaining entries in the first column up by one unit. 

Since $U$ is a Richardson tableau, by the above construction, it is easy to check that $U'$ is tableau on $\{2,\ldots, n-1\}$ which satisfies the Richardson condition. Hence, by induction, there exists a unique noncrossing involution $u$ of $\{2,\ldots, n-1\}$ such that the insertion tableau of $u$ is $U'$. We set $w=n\cdot u\cdot  1$ to be the prime noncrossing involution of $[n]$ by appending $n$ and $1$ on the left and right sides of $u$.  Noticing that the insertion tableau of $w$ is generated from $U$ by placing $n$ as a single row on the bottom, we conclude that the insertion  tableau of $w$ is $T$.     
\end{itemize}

Combining the above, we see that $\Phi$ is indeed a bijection,  completing  the proof.     
\end{proof}

By the construction in the proof of Theorem \ref{hg-x}, we have the following observation. 

\begin{coro}\label{gh-12}
    If restricting $\Phi$ to the set of prime noncrossing involutions of $[n]$, then the image set is exactly the set of prime Richardson tableaux of size $n$. 
\end{coro}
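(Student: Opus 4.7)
The plan is to derive this corollary as a direct consequence of Lemma \ref{lem-23} together with the bijectivity of $\Phi$ established in Theorem \ref{hg-x}. The key observation is that Lemma \ref{lem-23} says $\Phi$ intertwines the direct sum operation on involutions with the concatenation operation on tableaux, so by induction $\Phi(w^1 \oplus \cdots \oplus w^k) = \Phi(w^1) \circ \cdots \circ \Phi(w^k)$ for any noncrossing involutions $w^1,\ldots,w^k$. Since both ``direct sum decomposition'' and ``concatenation decomposition'' admit unique maximal factorizations into prime pieces, $\Phi$ must respect primeness on both sides.

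Concretely, I would argue both directions by contradiction. For the forward direction, suppose $w$ is a prime noncrossing involution of $[n]$ but $\Phi(w) = T$ is not a prime Richardson tableau. Then $T = T_1 \circ T_2 \circ \cdots \circ T_k$ with $k \geq 2$ and each $T_i$ a Richardson tableau of size strictly less than $n$. Applying Theorem \ref{hg-x} in size less than $n$, each $T_i = \Phi(w^i)$ for a unique noncrossing involution $w^i$. By Lemma \ref{lem-23} (iterated),
\[
\Phi(w^1 \oplus w^2 \oplus \cdots \oplus w^k) = T_1 \circ T_2 \circ \cdots \circ T_k = \Phi(w),
\]
and injectivity of $\Phi$ forces $w = w^1 \oplus \cdots \oplus w^k$, contradicting the primeness of $w$.

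The reverse direction is completely symmetric: if $T$ is a prime Richardson tableau with $w = \Phi^{-1}(T)$ non-prime, write $w = w^1 \oplus \cdots \oplus w^k$ with $k \geq 2$ and each $w^i$ a noncrossing involution of smaller size (using Remark \ref{re-3-1}). Then Lemma \ref{lem-23} yields $T = \Phi(w^1) \circ \cdots \circ \Phi(w^k)$, exhibiting a nontrivial concatenation decomposition of $T$ into Richardson tableaux (each $\Phi(w^i)$ is Richardson by Theorem \ref{hg-x}), which contradicts the primeness of $T$. There is no real obstacle here; the argument is purely formal once Lemma \ref{lem-23} and Theorem \ref{hg-x} are in hand, so I would keep the exposition to a few lines.
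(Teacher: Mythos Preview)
Your argument is correct and is essentially the paper's approach: the paper records the corollary as an immediate observation from the case analysis in the proof of Theorem~\ref{hg-x}, and what you have written---using Lemma~\ref{lem-23} to show that $\Phi$ intertwines $\oplus$ with $\circ$, hence preserves primeness in both directions once bijectivity is known---is exactly the content of that observation spelled out. The one fact you invoke tacitly (as does the paper in its reverse-direction case~(1)) is that the concatenation factors of a Richardson tableau are again Richardson; this is part of \cite[Proposition~3.13]{KP}.
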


\begin{re}\label{re-xj}
Notice that there is a bijection between the set of prime noncrossing partial matchings on $[n]$ and the set of noncrossing matchings on $[n-2]$ by removing the arc $(1,n)$. So Corollary \ref{gh-12} induces a bijection, denoted  $\Psi$, between   the set of prime Richardson tableaux of size $n$ and the set of Richardson tableaux of size $n-2$. Moreover, by the construction in the proof of Theorem \ref{hg-x}, $\Psi$ will send a prime Richardson tableau of shape $\lambda=(\lambda_1,\ldots, \lambda_{\ell-2},1,1)$  to a Richardson tableau  of shape $\lambda=(\lambda_1,\ldots, \lambda_{\ell-2})$. So $\Psi$ yields an alternative proof of \cite[Corollary 4.3]{KP} which is restated in Corollary  \ref{coro-22}.   
\end{re}

\footnotesize{

\textsc{(Peter L. Guo) Center for Combinatorics, Nankai University, LPMC, Tianjin 300071, P.R. China}

{\it
Email address: \tt lguo@nankai.edu.cn}

\end{document}